\newcommand\E{\ensuremath{\mathbb{E}}}
\renewcommand\P{\ensuremath{\mathbb{P}}}
\newcommand\R{\ensuremath{\mathbb{R}}}
\newcommand\e{\ensuremath{\mathrm{e}}}
\newcommand{\Y}{{\bf Y}}
\newcommand{\X}{{\bf X}^*}
\newcommand{\fori}{i = 1, 2, \dotsc, n}
\renewcommand{\b}{\beta}
\renewcommand{\a}{\alpha}
\newcommand{\cA}{{\mathcal A}(X)}
\newcommand{\cP}{{\mathcal P}}
\newcommand{\cC}{{\mathcal C}(X)}
\newcommand{\noi}{\noindent}
\newcommand{\cF}{{\mathcal F}}
\newcommand{\tg}{ {\tilde{g}} }
\newcommand{\la}{\lambda}
\renewcommand{\th}{\theta}
\newtheorem{theorem}{Theorem}
\newtheorem{lemma}{Lemma}
\newtheorem{proposition}{Proposition}
\newtheorem{corollary}{Corollary}
\newtheorem{definition}{Definition}
\theoremstyle{remark}
\newtheorem{remark}{Remark}
\newtheorem{example}{Example}
\DeclareMathOperator*{\esssup}{ess\,sup}
\begin{document}

\title{Optimal Risk Sharing under Distorted Probabilities}

\title[Optimal Risk Sharing under Distorted Probabilities]{Optimal Risk
Sharing under Distorted Probabilities}
\author{Michael Ludkovski$^\dag$   \and Virginia R. Young}
\address{Department of Mathematics \\ University of Michigan \\ 530 Church St. \\ Ann
Arbor, Michigan 48109 USA}
\address{Department of Statistics and Applied Probability \\ University of California \\ Santa
Barbara, California 93106 USA} \email{ludkovski@pstat.ucsb.edu, vryoung@umich.edu}
\date{\today}

\keywords{distortion risk measures, comonotonicity, risk sharing, Pareto optimal allocations}

\subjclass{91B30, 91B32, 62P05; \; JEL Classification: D81 }
\thanks{$^\dag$ Corresponding Author. Tel: 1(805)893-5634.}

\maketitle

\begin{abstract}
We study optimal risk sharing among $n$ agents endowed with distortion risk measures. Our model
includes market frictions that can either represent linear transaction costs or risk premia
charged by a clearing house for the agents.  Risk sharing under third-party constraints is also
considered. We obtain an explicit formula for Pareto optimal allocations. In particular, we
find that a stop-loss or deductible risk sharing is optimal in the case of two agents and
several common distortion functions. This extends recent result of Jouini et al.\ (2006) to the
problem with unbounded risks and market frictions.
\end{abstract}

\section{Introduction}\label{sect:intro}

Many financial problems involve transfer of risk among agents. Two noteworthy examples are
insurance markets and the general equilibrium theory of stock prices. In such problems, $n \ge
2$ agents with risky endowments (or loss exposures) $X_i$ for $i = 1, 2, \dots, n$ are
interested in devising an optimal re-allocation of their risks.  Let $X \triangleq \sum_{i =
1}^n X_i$ be the total exposure of the $n$ agents, and let $V_i$ be the subjective valuation
(preference) functional of the $i$-th agent.  Consider the collection of allocations of the
loss $X$, namely
$$
\cA \triangleq \{ {\bf Y} := (Y_1, Y_2, \dots, Y_n): X = \sum_{i = 1}^n Y_i, \; V_i(Y_i) \hbox{ finite} \}.
$$
  The risk sharing problem consists in finding an \emph{optimal}
allocation ${\bf Y}^* \in \cA$, namely an allocation such that (i) ${\bf Y}^*$ is \emph{Pareto
optimal}, that is, no agent can be made strictly better off without another agent being made
strictly worse off; and (ii) ${\bf Y}^*$ satisfies a \emph{rationality constraint}, that is,
all agents are at least as well off under ${\bf Y}^*$ as under the initial exposures ${\bf X} =
(X_1, X_2, \dots, X_n)$.  The latter feasibility constraint is motivated by the assumption that
only an irrational agent would enter into a contract that made the agent (strictly) worse off.

The key ingredient in the above problem are the preference functionals $V_i$, and accordingly
the optimal risk sharing literature has evolved as new theories of risk have been developed.
Pioneering work was carried out in the 1960s by Borch~\cite{Borch62} and Arrow~\cite{Arrow63}
who showed that deductible insurance is optimal under concave risk preferences, specifically,
when $V_i$ are represented by von Neumann-Morgenstern utility functions. Later research studied
the case of the dual theory of risk of Yaari \cite{YoungBrowne00} and Choquet expected utility
theory \cite{CDT00}. Very recently, research has focused on risk preferences given in terms of
convex risk measures \cite{FollmerSchied}. In particular, Barrieu and
El~Karoui~\cite{BarrieuKaroui05} studied optimal risk sharing under the exponential
indifference measure, while Jouini et al.~\cite{JST} analyzed the case of two agents and
convex, law-invariant risk measures. The related question of market equilibrium was addressed
in \cite{Acciaio}, \cite{BurgRusch08} and \cite{FilipovicKupper08eqm}. On a more abstract
level, Ludkovski and R\"uschendorf \cite{LR07} show that Pareto optimal allocations are
comonotone if the risk measures preserve the convex order. The latter structural result allows
for some explicit computations, as it permits direct representation of possible allocations
through the pooling functions.

A simultaneous strand of the literature has been addressing extensions of the basic insurance
problem that take into account market frictions. For example, the fundamental problem of
adverse selection was initiated by Rothschild and Stiglitz \cite{RS76} and later further
discussed in \cite{YoungBrowne00}. The effect of transaction costs on optimal contracts was
first considered by Raviv~\cite{Raviv79}. Other possible externalities are summarized in the
survey articles of Gerber~\cite{Gerber78} and Aase~\cite{Aase02}. Many markets also impose
constraints on possible risk transfers. Often, only a limited set of risk instruments is a
priori given, so that risk sharing must belong to the span of available contracts (as studied
by Filipovic and Kupper~\cite{FilipovicKupper08}). Alternatively, the amount of risk transfer
is limited by regulator authorities; for instance in the classical insurance problem the
insurer may be able to take on only part of the total risk due to risk capital regulations. The
latter problem, which we call risk sharing under constraints, introduces effectively $n+1$
players into the model, namely $n$ original participants, plus the additional regulator that
imposes limits on allowable risk exposures of each participant. The special case of
Value-at-Risk constraints was recently analyzed in Bernard and Tian~\cite{BernardTian08}.

This article extends previous results in these two directions by studying optimal risk sharing
in the context of distortion risk measures, transaction costs and/or third-party constraints.
Distortion risk measures lie at the junction of actuarial and financial applications, being
related both to the dual theory of risk and coherent risk measures. The transaction costs in
our model have a dual nature and can either represent genuine transaction fees arising due to
verification, accounting and other inter-agent costs, or the risk-loaded premium charged by the
insurer. For the constraints, we consider a general set of restrictions given in terms of
distortion risk measures.
%


Our main result, namely Theorem \ref{thm:Po}, shows that in all of the above cases, the optimal
risk allocation consists of a collection or ``ladder'' of deductible contracts. This result can
be interpreted as an economic justification for the  tranche contracts one observes in
practice, in particular, in credit and reinsurance markets. Moreover, using the quantile
representation of distortion risk measures we are able to explicitly characterize Pareto
optimal contracts under transaction costs and/or constraints. In turn, this allows us to
present several completely worked-out examples of optimal risk sharing under some common risk
measures, such as Average Value-at-Risk.

In terms of related literature, Theorem \ref{thm:Po} is an extension of the results of Jouini
et al.~\cite{JST} to the multi-agent case with transaction costs and constraints. Compared to
their abstract approach based on convex duality an inf-convolution, our method is more
elementary and direct and provides a clearer insight into the problem structure. On a more
general note, this paper aims to underscore the usefulness of distortion risk measures that
have been arguably under-appreciated by the financial/mathematical economics community
\cite{Dhaene06}. In contrast to the classical expected utility theory, this new framework is
driven by two factors. First, it postulates cash-equivariant preferences that are appealing
based on the normative observation that guaranteed cash payments should not affect risk
attitudes. Secondly, distortion risk measures attempt to mirror business practices where
various Value-at-Risk (VaR) methodologies have emerged as the tool of choice. In particular,
Average Value-at-Risk (AVaR) has been gaining practitioner acceptance and also happens to be a
canonical example of our model.

This paper is organized as follows:  In Section \ref{sect:model}, we define the setting in
which the $n$ agents seek a Pareto optimal risk exchange.  In Section \ref{sect:pareto}, we
obtain the class of Pareto optimal risk exchanges in our model. This is then generalized to the
constrained setting in Section \ref{sect:constraint}. We focus on the case of two agents in
Section \ref{sect:n=2}, while interpreting one agent as an insurer and another as a buyer of
insurance. In this simplified setup we present fully solved examples, including examples with
explicitly computable deductibles. In Section \ref{sect:buyer}, we provide another illustration
of our results by considering a single-agent minimization by a buyer of insurance who faces a
regulator constraint on the possible indemnity contracts. Section \ref{sect:conclusion}
concludes the paper.

\section{Model for Risk Sharing}\label{sect:model}

\subsection{Distorted Probabilities}

Consider the collection of a.s.-finite random variables $\cP = \{Y: \P[-\infty < Y < \infty] =
1\}$ on a probability space $(\Omega, \cF, \P)$. As usual, we denote by $L^\infty \subset \cP$
($L^1 \subset \cP$) the collection of all a.s.\ bounded (respectively integrable) random
variables.

\begin{definition}
Two random variables $Y$ and $Z \in \cP$ are said to be {\rm comonotone} if
\begin{equation}
(Y(\omega_1) - Y(\omega_2)) (Z(\omega_1) - Z(\omega_2)) \ge 0,
\end{equation}

$\P(d\omega_1) \times \P(d\omega_2)$-almost surely. In other words, $Y$ and $Z$ move together.
\end{definition}

An equivalent definition of comonotonicity is that there exists a random variable $V \in \cP$
and non-decreasing functions $f_Y$ and $f_Z$ such that $Y = f_Y(V)$ and $Z = f_Z(V)$ almost
surely \cite{Denneberg94}.  Another equivalent definition is that there exist non-decreasing
functions $h_Y$ and $h_Z$ such that $h_Y(x) + h_Z(x) = x$, $Y = h_Y(Y + Z)$, and $Z = h_Z(Y +
Z)$ almost surely.

\begin{definition}
A function $H : \cP \to \R$ is called a {\rm law-invariant, comonotone, monetary risk measure} $($or {\rm distortion risk measure}$)$ if $H$ satisfies the following five properties:
\begin{enumerate}
\item[(a)] $H(Y)$ depends only on the law of $Y \in \cP$.

\item[(b)] $H$ is monotone in the natural order of $\cP$.

\item[(c)] $H$ is cash equivariant: $H(Y + a) = H(Y) + a$ for any $a \in \R$.

\item[(d)] $H$ is subadditive in general and additive for comonotone risks:  For $Y, Z \in \cP$,
\begin{align}
H(Y + Z) \leq H(Y) + H(Z),
\end{align}
with equality for any $Y, Z$ comonotone.

\item[(e)] $H$ is continuous:  For $Y \in \cP$,
\begin{subequations}
\begin{align}
&\lim_{d \rightarrow -\infty} H[ \max(Y, d)] = H(Y),\\
&\lim_{d \rightarrow 0^+} H[ \max(Y - d, 0)] = H(Y), \hbox{ if } Y \ge 0, \\
&\lim_{d \rightarrow \infty} H[ \min(Y, d)] = H(Y).
\end{align}
\end{subequations}

\end{enumerate}
\end{definition}

The above axioms are justified by basic economic principles as applied to insurance; see
\cite{Dhaene06,JST,WYP97}.  Because we are interested in risk sharing, cash equivariance is a
desirable property because receiving fixed payments (at least within a reasonable range) should
not affect attitudes towards risk. The comonotone additivity property represents  inability to
diversify risks that always move in the same direction. The continuity property (e) is for
technical reasons, although it was shown by \cite{Jouini05} that viewing $H$ as a map on
$L^\infty(\P)$, (e) is automatically implied by (a)-(d).

Denote by $S_Y$ the (decumulative) distribution function of $Y$, that is, $S_Y(t) = \P(Y > t)$,
and by $S^{-1}_Y$ the (pseudo-)inverse of $S_Y$, which is unique up to a countable set
\cite{Denneberg94}.  For concreteness, take $S_Y^{-1}(p) = \sup \{ t: S_Y(t) > p \}$.  The
inverse $S^{-1}_Y$ thus defined is right continuous; if one were to desire left continuity,
then replace $>$ with $\ge$.

We recall that any distortion risk measure admits the following representation, which
essentially follows from Greco's representation theorem \cite{Greco82}:

\begin{theorem} {\rm (\cite{Denneberg94}, \cite[Appendix A]{WYP97})}
Let $H$ be a distortion risk measure. Then, there exists a non-decreasing, concave function $g:[0,1] \to [0,1]$ such that $g(0) = 0$, $g(1) =1$, and
\begin{align}\label{eq:2.1}
H(Y) & = \int Y \, d(g \circ \P) = \int_0^1 S^{-1}_Y(p) \, dg(p) \\ \notag & = \int_{-\infty}^0 \left( g[S_Y(t)] - 1 \right) \, dt + \int_0^\infty g[S_Y(t)] \, dt , \quad \forall Y \in \cP.
\end{align}
\end{theorem}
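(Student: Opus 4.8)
The plan is to read off the distortion function $g$ from the action of $H$ on indicator functions, to identify $H$ with a Choquet integral on bounded random variables, and then to push the identity through to all of $\cP$ using the continuity axiom (e). First I would define, for $A\in\cF$, the set function $\mu(A):=H(\mathbf 1_A)$. Law-invariance (a) makes $\mu(A)$ depend only on $\P(A)$ (since $\mathbf 1_A$ has a Bernoulli$(\P(A))$ law), so there is a function $g$ on the range of $\P$ with $\mu(A)=g(\P(A))$; when the space is non-atomic this range is all of $[0,1]$, and otherwise I would extend $g$ to $[0,1]$ at the end by monotonicity and concavity. Applying comonotone additivity (d) to the pair $(0,0)$ forces $H(0)=2H(0)$, hence $H(0)=0$ and $g(0)=\mu(\emptyset)=0$; cash-equivariance (c) at $Y=0$ then gives $g(1)=\mu(\Omega)=H(0)+1=1$. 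Monotonicity (b) on nested events shows $g$ is non-decreasing, so indeed $g:[0,1]\to[0,1]$.

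Next I would show that $g$ is concave. The crucial observation is that $\mu$ is submodular: for $A,B\in\cF$ one has the pointwise identity $\mathbf 1_{A\cup B}+\mathbf 1_{A\cap B}=\mathbf 1_A+\mathbf 1_B$, while $\mathbf 1_{A\cup B}$ and $\mathbf 1_{A\cap B}$ are comonotone because $A\cap B\subseteq A\cup B$, so (d) gives
\[
\mu(A\cup B)+\mu(A\cap B)=H(\mathbf 1_{A\cup B}+\mathbf 1_{A\cap B})=H(\mathbf 1_A+\mathbf 1_B)\le\mu(A)+\mu(B).
\]
Then, for $0\le p\le q$ and $h\ge 0$ with $q+h\le 1$, I would pick nested events $A'\subseteq A\subseteq B$ with $\P(A')=p$, $\P(A)=q$, $\P(B)=q+h$ and set $B':=A'\cup(B\setminus A)$, so that $\P(B')=p+h$, $A\cup B'=B$, and $A\cap B'=A'$; submodularity then reads $g(q+h)+g(p)\le g(q)+g(p+h)$, i.e. the increments of $g$ are non-increasing, hence (being monotone) $g$ is concave.

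For the integral formula I would first take $Y\in L^\infty$. Here $H$ is $\|\cdot\|_\infty$-Lipschitz, immediately from (b) and (c), so it is enough to verify the formula on simple functions; writing a simple $Y$ as a ``staircase'' $y_0+\sum_k(y_k-y_{k-1})\mathbf 1_{\{Y\ge y_k\}}$ of comonotone layers, comonotone additivity together with $H(\mathbf 1_A)=g(\P(A))$ yields exactly the Choquet integral $\int Y\,d(g\circ\P)$ — this step is precisely the content of Greco's representation theorem. Since $\mu(\{Y>t\})=g(S_Y(t))$ and $\mu(\Omega)=g(1)=1$, that Choquet integral is by definition $\int_{-\infty}^0(g[S_Y(t)]-1)\,dt+\int_0^\infty g[S_Y(t)]\,dt$, and the substitution $p=S_Y(t)$ (an application of Fubini) rewrites it as $\int_0^1 S_Y^{-1}(p)\,dg(p)$.

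Finally I would extend from $L^\infty$ to $\cP$. Given $Y\in\cP$, applying comonotone additivity (d) to the non-decreasing functions $Y^+=\max(Y,0)$ and $-Y^-=\min(Y,0)$ of $Y$ (whose sum is $Y$) reduces matters to the non-negative and non-positive cases; truncating $Y^+$ from above and $-Y^-$ from below and invoking the corresponding limits in axiom (e) makes $H$ of the truncations converge to $H(Y^+)$ and $H(-Y^-)$, while monotone convergence sends the (finite) Choquet integrals of the truncations to those of the limits, so the representation established on $L^\infty$ survives. I expect this last step to be the main obstacle: axiom (e) is packaged as three separate one-sided limits, and one must apply them in the right combination to match exactly the truncations for which the Choquet integrals converge, and one must keep track of finiteness (which is guaranteed since $H$ is real-valued on $\cP$). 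Everything else — the submodularity-to-concavity argument, the Lipschitz/simple-function reduction, and the bookkeeping around atoms of $\P$ — is comparatively routine.
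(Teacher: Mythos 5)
The paper does not prove this theorem at all --- it is stated as a quoted result, with the proof delegated to Denneberg and to Wang--Young--Panjer via Greco's representation theorem --- so there is no in-paper argument to compare against. Your sketch reconstructs the standard proof from those sources (essentially Schmeidler's theorem plus the Choquet-integral machinery), and it is sound: reading $g$ off indicator functions via law-invariance, deriving submodularity of $A\mapsto H(\mathbf 1_A)$ from the pointwise identity $\mathbf 1_{A\cup B}+\mathbf 1_{A\cap B}=\mathbf 1_A+\mathbf 1_B$ together with comonotone additivity of the nested pair and subadditivity of the general pair, translating submodularity into non-increasing increments of $g$ (hence concavity of a monotone function), identifying $H$ with the Choquet integral on simple and then bounded variables via the Lipschitz estimate $|H(Y)-H(Z)|\le\|Y-Z\|_\infty$, and finally extending to $\cP$ by splitting $Y=\max(Y,0)+\min(Y,0)$ into comonotone halves and truncating each so that the three one-sided limits in axiom (e) apply. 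Two caveats, both of which you already flag: the construction of $g$ on all of $[0,1]$ and of the nested events $A'\subseteq A\subseteq B$ with prescribed probabilities silently requires the probability space to be rich enough (non-atomic), a hypothesis the paper never states but which is standard in this literature; and the final truncation step needs the finiteness of $H(\max(Y,0))$ and $H(\min(Y,0))$ separately, which follows from $H$ being real-valued on $\cP$ together with comonotone additivity. Neither is a gap in your argument so much as a gap in the paper's hypotheses, and your proof is a faithful account of what the cited references actually do.
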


\medskip

We write $H_g$ for $H$ when we want to specify the particular function $g$ in \eqref{eq:2.1}.  The function $g$ is called a {\it distortion} because it modifies, or distorts, the tail probability $S_Y$.   Observe if $g(p) = p$, then $H_g(Y) = \E Y$.  For this reason, $H_g$ is also referred to as an expectation with respect to a distorted probability.  Note that at this stage we allow $H_g$ to take $\pm \infty$ as a value.

We assume that each agent orders random variables in $\cP$ by using a distortion risk measure
$H_g$, where $Y$ is preferred to (that is, less risky than) $Z$ by the agent if $H_g(Y) \le
H_g(Z)$, and we pursue this topic in the next section.  For more background on such risk
measures $H_g$, see Yaari~\cite{Yaari87} who discusses evaluating random variables in a theory
of risk that is dual to expected utility. Two noteworthy examples of distortion risk measures
are (1) the Average Value-at-Risk at level $1-\alpha^{-1}$ (AVaR) obtained by taking $g(p)=
\min(\alpha p, 1)$ for some $\alpha > 1$ and (2) the proportional hazards transform $g(p) =
p^c$ for some $0 < c < 1$.

\begin{remark} It has been shown \cite{Cherny06,Kusuoka01} that any distortion risk measure is a weighted average of the AVaR. Namely, define $AVaR_\alpha(Y)$ as above. Then, any comonotone law-invariant  coherent risk measure on $\cP$ can be written as
$$ H(Y) = \int_0^1 AVaR_\alpha(Y) \mu(d\alpha), $$
for some probability measure $\mu$ on $[0,1]$. For this reason, \cite{Cherny06} calls a distortion risk measure Weighted VaR.
\end{remark}

\begin{remark}
Since a distortion risk measure is a special case of a coherent risk measure, one can also
obtain a representation of $H$ in terms of penalized expectations, $H(Y) = \sup_{Q \in
\mathcal{D}} \E_Q[Y]$, for the set $\mathcal{D}$ of probability measures called the {\it core}
of $g \circ \P$, and $Y \in L^1$ \cite[Proposition 10.3]{Denneberg94}. For more results in this
direction see \cite{FollmerSchied}.
\end{remark}

\begin{definition}
$Y$ is said to {\rm precede} $($or be {\rm preferred} to$)$ $Z$ in \emph{convex order} if $\int_0^q S^{-1}_{Y}(p) \, dp \le \int_0^q S^{-1}_{Z}(p) \, dp$ for all $q \in [0, 1]$ with equality at $q = 1$.  We write $Y \le_{cx} Z$.
\end{definition}

Note that convex order is equivalent to ordering with respect to second stochastic dominance
with equal means \cite{RS70,RS71,RS72}.  For later use, recall that $H_g$ satisfies the
following properties for $Y \in \cP$ (see \cite{WangYoung98}):

\begin{enumerate}
\item[(a)] Positive homogeneity:  If $a \ge 0$, then $H_g(aY) = a H_g(Y)$.  Note that positive homogeneity and subadditivity imply that $H$ is convex, that is, $H(\lambda Y + (1 - \lambda) Z) \le \lambda H(Y) + (1 - \lambda) H(Z)$ for all $\lambda \in (0, 1)$.

\item[(b)] Duality:  $H_g(-Y) = - H_\tg(Y)$, in which $\tg$ is the dual distortion of $g$ given
by $\tg(p) = 1 - g(1 - p)$ for $p \in [0, 1]$.  Since $g$ is concave, $\tg$ is convex. The dual
$H_\tg$ can be thought of as a monetary utility function that measures attitudes towards wealth
levels; see \cite{Jouini05}.

\item[(c)] Convex ordering: Because $g$ is concave, $H_g$ preserves $\le_{cx}$, that is, if $Y
\le_{cx} Z$ then $H_g(Y) \le H_g(Z)$. In particular, because $\E Y \le_{cx} Y$, then $\E Y =
H_g(\E Y) \le H_g(Y)$.

\item[(d)] Non-excessive loading:  $H(Y) \le \esssup  Y$.
\end{enumerate}


\subsection{Economic Objective}

Suppose agent $i$ faces a random loss $X_i$ before any risk exchange for $\fori$.  If the
collection of agents trades the original allocation $\bf X$ for the allocation ${\bf Y} \in
\cA$, then the random loss or payout, including transaction costs, of agent $i$ becomes
\begin{equation}\label{eq:terminal}
Z_i = Y_i + (a_i + b_i Y_i + c_i \E Y_i) = (1 + b_i)Y_i + a_i + c_i \E Y_i.
\end{equation}

\noi The additive factor $a_i \ge 0$ is a fixed cost associated with transferring the risk $X_i$ to the coalition of agents (or to a central clearing house); for example, $a_i$ could be the premium that the agent pays to the coalition to eliminate the risk $X_i$.  The multiplicative factor $b_i \ge 0$ represents costs associated with the actual size of the random loss $Y_i$, for example, investigative costs that could increase proportionally with the size of the loss.  The factor $c_i \in \R$ represents costs that reflect the {\it expected} size of the payout $Y_i$, for example, hiring claim administrators; $c_i$ is also net of any
premium that the agent {\it receives} in exchange for accepting the risk $Y_i$, if the premium equals $(1 + \theta) \E Y_i$ as in \cite{Arrow63}.  In fact, we might wish to say that $c_i = -(1 + \theta)$, that is, {\it all} of this part of the cost function arises from premium received.  We explore this in examples later in the paper, as well as at the end of this section.

Agent $i$, for $i = 1, 2, \dots, n$, seeks to minimize $H_{g_i}(Z_i)$ for some concave distortion function $g_i$.  Note that minimizing
\begin{equation}\label{eq:Htg}
H_{g_i}(Z_i) = H_{g_i}((1 + b_i)Y_i + a_i + c_i \E Y_i) = (1 + b_i)
H_{g_i}(Y_i) + a_i + c_i \E Y_i
\end{equation}
\noi is equivalent to minimizing
\begin{equation}\label{eq:Hg}
V_i(Y_i) : = (1 + b_i) H_{g_i}(Y_i) + c_i \E Y_i.
\end{equation}

In light of this recasting of agent $i$'s goal, a Pareto optimal risk exchange is defined as follows:

\begin{definition}\label{def:Po}
${\bf X}^* \in \cA$ is called a {\rm Pareto optimal} risk exchange or allocation if whenever
there exists an allocation ${\bf Y} \in \cA$ such that $V_i(Y_i) \le V_i(X^*_i)$ for all
$\fori$, then $V_i(Y_i) = V_i(X^*_i)$ for all $\fori$.
\end{definition}

\noi In other words, there is no way to make any agent (strictly) better off without making another agent (strictly) worse off.

We assume that the initial allocation carries finite risk, that is, $H_{g_i}(X_i)$ is finite for $\fori$.  Therefore, there exists at least one allocation $\bf Y$, namely $\bf X$ itself, such that $V_i(Y_i)$ is finite for all $\fori$.

We end this section by discussing the rationality constraint mentioned in the Introduction.  In order that the allocation ${\bf Y} \in \cA$ be feasible (regardless of whether it is Pareto optimal), it must be true that each agent is at least as well off under $\bf Y$ as under the original allocation $\bf X$.  That is, the following inequality must hold for each $\fori$:  $H_{g_i}(X_i) \ge V_i(Y_i)$.  We assume that the set of feasible allocations in $\cA$ is non-empty.

When first presenting the cost function $a_i + b_i Y_i + c_i \E Y_i$ in connection with equation (\ref{eq:terminal}), we proposed that one might wish to consider the last term as representing premium received in exchange for accepting the risk $Y_i$.  In that case, write the premium as $-c_i \E Y_i = (1 + \theta) \E Y_i$, so that the rationality constraint becomes
\begin{equation}\label{eq:prem}
(1 + \theta) \E Y_i \ge a_i + (1 + b_i) H_{g_i}(Y_i) - H_{g_i}(X_i).
\end{equation}

\noi One can interpret the left-hand side of inequality (\ref{eq:prem}) as the minimum premium that agent $i$ is willing to accept for replacing $X_i$ with $Y_i$.  Therefore, the rationality constraint holds in this case if the premium received is at least as great as the risk-adjusted cost, as measured by the right-hand side of (\ref{eq:prem}).

\section{Pareto Optimal Allocations}\label{sect:pareto}

To describe the Pareto optimal allocations, we begin with a series of lemmas.  In the first
lemma, we show that if the $1 + b_i + c_i$'s are of different signs or if one of them is zero
and the other is non-zero, then no Pareto optimal allocation exists.

\begin{lemma}\label{lem:diffsign}
Suppose there exist $i, j = 1, 2, \dots, n$ such that $1+b_i + c_i \neq 0$ and $(1 + b_i +
c_i)(1 + b_j + c_j) \le 0,$ then no Pareto optimal allocation in $\cA$ exists.
\end{lemma}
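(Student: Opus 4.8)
The plan is to derive a contradiction by constructing, from any candidate allocation, a strictly better one for \emph{all} agents, which by Definition~\ref{def:Po} precludes Pareto optimality. The key observation is that $V_i(Y_i) = (1+b_i)H_{g_i}(Y_i) + c_i\E Y_i$ responds to a deterministic shift $Y_i \mapsto Y_i + t$ by $V_i(Y_i+t) = V_i(Y_i) + (1+b_i+c_i)\,t$, using cash equivariance (property (c)) and linearity of expectation. So the sign of $1+b_i+c_i$ governs whether agent $i$ prefers to absorb or shed deterministic cash, and this is the lever I would pull.

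First I would take an arbitrary allocation $\mathbf{Y}\in\cA$ and focus on the two indices $i,j$ from the hypothesis. The plan is to transfer a deterministic amount $t\in\R$ from one to the other: set $Y_i' = Y_i + t$, $Y_j' = Y_j - t$, and $Y_k' = Y_k$ for $k\neq i,j$. This stays in $\cA$ since the sum is unchanged and, for $t$ in a neighborhood of $0$, the $V_k(Y_k')$ remain finite (they are literally unchanged for $k\ne i,j$, and for $i,j$ they shift by a finite amount). Then $V_i(Y_i') = V_i(Y_i) + (1+b_i+c_i)t$ and $V_j(Y_j') = V_j(Y_j) - (1+b_j+c_j)t$. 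I want both changes to be $\le 0$ with at least one strictly negative; in fact I can get both strictly negative unless one of the coefficients is zero.

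The case analysis is short. If $(1+b_i+c_i)$ and $(1+b_j+c_j)$ are both nonzero with opposite signs (their product $<0$), pick the sign of $t$ so that $(1+b_i+c_i)t<0$; then $-(1+b_j+c_j)t<0$ as well, and both agents are strictly better off. If instead $1+b_i+c_i\ne 0$ but $1+b_j+c_j = 0$ (the degenerate subcase of the product being $\le 0$), then agent $j$'s value is unaffected by any shift, so pick $t$ with $(1+b_i+c_i)t<0$: agent $i$ is strictly better off and agent $j$ is unchanged. Either way we have produced $\mathbf{Y}'\in\cA$ with $V_k(Y_k')\le V_k(X_k^*)$ for all $k$ (applying this to $\mathbf{Y}=\mathbf{X}^*$) and strict inequality for some $k$, contradicting Pareto optimality of any alleged $\mathbf{X}^*$. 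Hence no Pareto optimal allocation exists.

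I do not anticipate a serious obstacle here; the only point requiring a word of care is staying inside $\cA$, i.e.\ checking that the shifted allocation still has all $V_k$ finite — but this is immediate since only $Y_i,Y_j$ change and they change by a constant, under which $H_{g_k}$ and $\E$ (hence $V_k$) shift by a finite amount by cash equivariance. If one wanted the argument to apply to \emph{every} allocation rather than just a putative optimum, the same construction works verbatim, showing the stronger statement that no allocation is Pareto optimal.
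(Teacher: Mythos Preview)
Your proposal is correct and follows essentially the same approach as the paper: both arguments shift a deterministic constant between the two distinguished agents and use cash equivariance to compute the resulting change in $V_i$, producing a strict improvement for at least one agent and no deterioration for the others. The paper simply fixes the shift to $t=1$ after a WLOG on the signs, whereas you keep $t$ free and do the sign analysis explicitly; the content is the same.
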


\begin{proof}
Without loss of generality, suppose that $1 + b_1 + c_1 < 0$ and $1 + b_2 + c_2 \ge 0$.
Consider any $\Y \in \cA$. Then, ${\bf Z} = (Y_1 + 1, Y_2 - 1, Y_3, \dots, Y_n)$ is a strict
improvement on $\Y$ because $V_1(Z_1) = V_1(Y_1) + (1 + b_1 + c_1) < V_1(Y_1)$ and $V_2(Z_2) =
V_2(Y_2) - (1 + b_2 + c_2) \le V_2(Y_2)$.  Thus, there exists no Pareto optimal allocation in
$\cA$. \qed
\end{proof}

For the present, we skip the case in which all $1 + b_i + c_i = 0$ for $\fori$; we consider it
more fully for the case of $n = 2$ in Section \ref{sect:n=2}.  The next lemma is
straightforward, but we include its proof for completeness.

\begin{lemma}\label{lem:easy}
If ${\bf X}^* = (X^*_1, X^*_2, \dotsc, X^*_n) \in \cA$ is Pareto optimal, then so is $(X^*_1, X^*_2, \dots, X^*_j + \beta, \dots, X^*_k - \beta, \dots, X^*_n) \in \cA$ for any $\beta \in \R$ and any $j, k = 1, 2, \dotsc, n$.
\end{lemma}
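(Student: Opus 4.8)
The plan is to use that each $V_i$ transforms affinely under the addition of a constant. By cash equivariance of $H_{g_i}$ (property (c)) together with linearity of $\E$, for every $Y \in \cP$ and every $\beta \in \R$ one has
\[
V_i(Y + \beta) = (1 + b_i)\bigl(H_{g_i}(Y) + \beta\bigr) + c_i\bigl(\E Y + \beta\bigr) = V_i(Y) + (1 + b_i + c_i)\,\beta .
\]
I would fix $j \ne k$ (the case $j = k$ being trivial, since then $\Z = {\bf X}^*$) and $\beta \in \R$, and consider the map $\Phi \colon \cA \to \cA$ that replaces $(Y_1, \dots, Y_n)$ by $(Y_1, \dots, Y_j + \beta, \dots, Y_k - \beta, \dots, Y_n)$. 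This map preserves the pooling constraint $\sum_i Y_i = X$; it also preserves finiteness of the valuations, because by the displayed identity $V_i(\Phi(\Y)_i) = V_i(Y_i) + \delta_i$, where $\delta_j := (1 + b_j + c_j)\beta$, $\delta_k := -(1 + b_k + c_k)\beta$, and $\delta_i := 0$ for $i \ne j, k$. Hence $\Phi$ is a well-defined bijection of $\cA$, with inverse given by the same construction with $\beta$ replaced by $-\beta$, and $\Z = \Phi({\bf X}^*) \in \cA$.

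The remaining step is to argue by contradiction. Suppose $\Z$ were not Pareto optimal; then there would be $\Y \in \cA$ with $V_i(Y_i) \le V_i(Z_i)$ for all $i$ and $V_{i_0}(Y_{i_0}) < V_{i_0}(Z_{i_0})$ for some $i_0$. Apply $\Phi^{-1}$: by the same coordinatewise identity, $V_i\bigl((\Phi^{-1}\Y)_i\bigr) = V_i(Y_i) - \delta_i \le V_i(Z_i) - \delta_i = V_i\bigl((\Phi^{-1}\Z)_i\bigr) = V_i(X^*_i)$ for every $i$, with strict inequality at $i_0$ since the shift $\delta_{i_0}$ cancels identically. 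This contradicts the Pareto optimality of ${\bf X}^*$, and so $\Z$ is Pareto optimal.

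I expect no genuine obstacle here; the argument is purely formal once the cash-equivariance computation in the first display is recorded. The only points that need a word of care are checking that $\Phi$ actually maps $\cA$ into itself (only finiteness of the $V_i$ must be verified, and this is immediate as we translate by constants) and checking that the strict inequality survives the pull-back by $\Phi^{-1}$ (which it does because $\Phi^{-1}$ shifts $V_{i_0}$ by the same constant at $\Y$ and at $\Z$). Note that no assumption on the signs of the $1 + b_i + c_i$ enters: if $\cA$ contains no Pareto optimal allocation at all --- the situation of Lemma~\ref{lem:diffsign} --- the statement is vacuously true, and otherwise the argument above applies as is.
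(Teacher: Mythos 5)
Your proof is correct and uses essentially the same argument as the paper: both rest on the identity $V_i(Y+\beta)=V_i(Y)+(1+b_i+c_i)\beta$ and on pulling a (weak or strict) dominator of the shifted allocation back by the inverse shift to obtain a dominator of ${\bf X}^*$. The paper phrases it directly (any weak dominator of the shifted allocation forces equalities), while you phrase it as a contradiction via the bijection $\Phi$, but the content is identical.
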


\begin{proof}
Let ${\bf X}^* = (X^*_1, X^*_2, \dots, X^*_n) \in \cA$ be Pareto optimal.  Suppose $\Y \in \cA$
is such that $V_j(Y_j) \le V_j(X^*_j + \beta)$, $V_k(Y_k) \le V_k(X^*_k - \beta)$, and
$V_i(Y_i) \le V_i(X^*_i)$ for $i \ne j, k$.  We want to show that equality holds in each case.
Inequality $V_j(Y_j) \le V_j(X^*_j + \beta)$ implies that $V_j(Y_j) \le V_j(X^*_j) + (1 + b_j +
c_j) \beta$, from which it follows that $V_j(Y_j - \beta) \le V_j(X^*_j)$.  Similarly,
$V_k(Y_k) \le V_k(X^*_k - \beta)$ implies that $V_k(Y_k + \beta) \le V_k(X^*_k)$.  Note that
the allocation $\Y'$ defined by $Y'_j = Y_j - \beta$, $Y'_k = Y_k + \beta$, and $Y'_i = Y_i$
for $i \ne j, k$ is in $\cA$.  Therefore, by the Pareto optimality of ${\bf X}^*$ we have
$V_j(Y_j - \beta) = V_j(X^*_j)$, $V_k(Y_k + \beta) = V_k(X^*_k)$, and $V_i(Y_i) = V_i(X^*_i)$
for $i \ne j, k$, from which it follows that $V_j(Y_j) = V_j(X^*_j + \beta)$, $V_k(Y_k) =
V_k(X^*_k - \beta)$, and $V_i(Y_i) = V_i(X^*_i)$ for $i \ne j, k$.  Hence, $(X^*_1, X^*_2,
\dots, X^*_j + \beta, \dots, X^*_k - \beta, \dots, X^*_n)$ is Pareto optimal. \qed
\end{proof}

It follows from Lemma \ref{lem:easy} that without loss of generality, we can assume that a Pareto optimal allocation assigns the loss 0 to each of the $n$ agents when the total loss $X$ is 0.  If this particular Pareto optimal allocation does not satisfy the rationality constraint in inequality (\ref{eq:prem}), then we can modify the allocation by constants (that sum to zero) so that the rationality constraint is satisfied.  (Recall that we assume that the set of feasible allocations is non-empty, so there exist such constants.)

Consider the mapping $F: \cA \rightarrow \R^n$ given by $F(\Y) = (V_1(Y_1), V_2(Y_2), \dots, V_n(Y_n))$.  We can partially order the points in $\R^n$ as follows:

\begin{definition}
For ${\bf x}, {\bf y} \in \R^n$, we write ${\bf x} \le {\bf y}$ if $x_i \le y_i$ for $\fori$.
\end{definition}

\noi The next lemma, whose proof is immediate from the definition of Pareto optimality in
Definition \ref{def:Po}, shows that the Pareto optimal points in $\cA$ correspond to the
minimal points in the image of $F$ in $\R^n$.
\begin{lemma}\label{lem:min}
If $\X \in \cA$ is Pareto optimal, then $F(\X) \in im(F)$ is minimal.  Conversely, if ${\bf x}
\in im(F)$ is minimal, then there exists $\X \in \cA$ with $F(\X) = {\bf x}$, such that $\X$ is
a Pareto optimal allocation.
\end{lemma}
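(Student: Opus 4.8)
The plan is to observe that this lemma is essentially Definition~\ref{def:Po} rewritten in terms of the partial order on $\R^n$, so I would prove each implication by unwinding definitions, invoking Pareto optimality in the forward direction and minimality in the converse.

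For the forward direction, I would assume $\X \in \cA$ is Pareto optimal and argue by contradiction. If $F(\X)$ were not minimal in $im(F)$, there would exist ${\bf y} \in im(F)$ with ${\bf y} \le F(\X)$ but ${\bf y} \neq F(\X)$; choosing $\Y \in \cA$ with $F(\Y) = {\bf y}$ gives $V_i(Y_i) \le V_i(X^*_i)$ for all $\fori$. By Definition~\ref{def:Po} this forces $V_i(Y_i) = V_i(X^*_i)$ for all $\fori$, i.e.\ ${\bf y} = F(\Y) = F(\X)$, contradicting ${\bf y} \neq F(\X)$. Hence $F(\X)$ is minimal.

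For the converse, given a minimal ${\bf x} \in im(F)$, I would pick any $\X \in \cA$ with $F(\X) = {\bf x}$ --- such an allocation exists precisely because ${\bf x}$ lies in the image of $F$ --- and then verify Pareto optimality directly from the definition: if $\Y \in \cA$ satisfies $V_i(Y_i) \le V_i(X^*_i)$ for all $\fori$, then $F(\Y) \le F(\X) = {\bf x}$ with $F(\Y) \in im(F)$, so minimality of ${\bf x}$ yields $F(\Y) = {\bf x} = F(\X)$, that is, $V_i(Y_i) = V_i(X^*_i)$ for all $\fori$. Thus $\X$ is Pareto optimal.

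Because the argument is this brief, I do not expect a genuine obstacle; the only points demanding a moment's care are that the preimage of a minimal point need not be unique (any such $\X$ will do, and the claim is only that one Pareto optimal allocation maps to ${\bf x}$), and that one must keep the inequalities oriented correctly --- since each $V_i$ is a risk that the agent minimizes, ``better off'' corresponds to a smaller coordinate, so Pareto optimal allocations sit at the minimal, not maximal, points of $im(F)$.
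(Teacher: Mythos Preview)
Your proof is correct and matches the paper's approach exactly: the paper states that the lemma's proof ``is immediate from the definition of Pareto optimality in Definition~\ref{def:Po}'' and gives no further argument, and your write-up is precisely that immediate unwinding of definitions.
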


%

We next use Lemmas \ref{lem:easy} and \ref{lem:min} to characterize the set of Pareto optimal
allocations when we view them as points in $\R^n$ via the mapping $F$.

\begin{theorem}\label{thm:F}
Suppose $(1 + b_i + c_i)(1 + b_j + c_j) > 0$ for all $i, j = 1, 2, \dots, n$.  Then, the image of the set of Pareto optimal allocations in $\cA$ under the mapping $F$ is a hyperplane in $\R^n$ given by
\begin{equation}\label{eq:hyper}
\left\{ {\bf x} \in \R^n: \sum_{i=1}^n (V_i(X^*_i) - x_i)/ \left(1 + b_i + c_i \right) = 0 \right\},
\end{equation}
in which ${\bf X}^* \in \cA$ is any Pareto optimal allocation.  Furthermore, one obtains such a Pareto optimal allocation $\X$ by minimizing
\begin{equation}\label{eq:min}
\sum_{i=1}^n V_i(Y_i)/ \big|1 + b_i + c_i \big|
\end{equation}
over $\Y \in \cA$.
\end{theorem}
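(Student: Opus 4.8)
The plan is to prove the two assertions in turn, using Lemmas~\ref{lem:easy} and~\ref{lem:min} together with the sign hypothesis. Throughout, abbreviate $w_i := 1 + b_i + c_i$, so that the hypothesis says all $w_i$ are nonzero with the same sign; by replacing $V_i$ by $-V_i$ if every $w_i < 0$ is not allowed, one instead notes that $|w_i| = w_i$ or $|w_i| = -w_i$ uniformly, and the weighted sum in~\eqref{eq:min} is $\sum_i V_i(Y_i)/|w_i|$. The first step is to understand how adding a constant $\beta$ to one coordinate and subtracting it from another moves the point $F(\Y)$: from~\eqref{eq:Hg} we have $V_i(Y_i + \beta) = V_i(Y_i) + w_i \beta$, so the transfer $Y_j \mapsto Y_j + \beta$, $Y_k \mapsto Y_k - \beta$ sends $F(\Y)$ along the vector $\beta(w_j \mathbf{e}_j - w_k \mathbf{e}_k)$. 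These vectors, as $j,k$ and $\beta$ vary, span exactly the hyperplane $\{\mathbf{v} : \sum_i v_i / w_i = 0\}$ (a vector $\mathbf{v}$ lies in this span iff $\sum_i v_i/w_i = 0$, by a standard linear-algebra argument since the $w_i$ are all nonzero). Hence, by Lemma~\ref{lem:easy}, the image under $F$ of the set of Pareto optimal allocations is invariant under translation by this hyperplane: if $\mathbf{x}$ is in the image, so is $\mathbf{x} + \mathbf{v}$ for any $\mathbf{v}$ with $\sum_i v_i/w_i = 0$.

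For the reverse containment — that the image is \emph{contained} in a single hyperplane of the stated form — I would use Lemma~\ref{lem:min}: Pareto optimal points map to minimal points of $\mathrm{im}(F)$. Suppose $\X$ and $\tX$ are both Pareto optimal but $\sum_i V_i(X^*_i)/w_i \neq \sum_i V_i(\tilde{X}_i)/w_i$; say the former is strictly larger. Using the transfers above (which stay inside $\cA$ and preserve Pareto optimality), translate $\X$ within its hyperplane so that $V_i(X^*_i) \le V_i(\tilde{X}_i)$ for all $i$ — this is possible precisely because the difference of the two points, after subtracting a suitable element of the hyperplane through $\tX$, can be arranged to have the sign of $w_i$ in each coordinate when the two weighted sums differ. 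This contradicts minimality of $F(\tX)$ unless the two weighted sums are equal. Combined with the translation-invariance from the first paragraph, this shows the image of the Pareto optimal set is exactly the hyperplane~\eqref{eq:hyper} anchored at any one Pareto optimal $\X$.

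For the final assertion, that minimizing~\eqref{eq:min} over $\cA$ produces a Pareto optimal allocation: let $\Y^\circ \in \cA$ attain (or approach) the infimum $m := \inf_{\Y \in \cA} \sum_i V_i(Y_i)/|w_i|$. If $\Y^\circ$ were not Pareto optimal, there would be $\Y \in \cA$ with $V_i(Y_i) \le V_i(Y^\circ_i)$ for all $i$ and strict inequality for some $i$; since all $|w_i| > 0$, this gives $\sum_i V_i(Y_i)/|w_i| < m$, contradicting minimality. Hence any minimizer is Pareto optimal, and its $F$-image lies on the hyperplane just identified. I would also remark that a minimizer exists under the standing assumptions (the feasible set is nonempty and the $V_i$ are finite there), or alternatively note that the argument only needs a point achieving the infimum, which can be arranged on the relevant hyperplane by Lemma~\ref{lem:easy}.

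The main obstacle I anticipate is the reverse containment in the second paragraph: one must carefully verify that given two Pareto optimal allocations with distinct weighted sums, the constant-transfers of Lemma~\ref{lem:easy} genuinely allow one to slide one dominated coordinatewise below the other. The key point is that the difference vector $F(\X) - F(\tX)$ can be shifted by any element of the hyperplane $\{\sum_i v_i/w_i = 0\}$, and when $\sum_i (V_i(X^*_i) - V_i(\tilde{X}_i))/w_i \ne 0$ one can choose the shift to land in the open orthant $\{\mathbf{u} : \mathrm{sgn}(u_i) = \mathrm{sgn}(w_i)\,\forall i\}$ (or its negative) — this uses essentially that the normal direction $(1/w_1, \dots, 1/w_n)$ is not orthogonal to that orthant, which holds exactly because all $w_i$ share a sign. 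Getting the bookkeeping of signs right here, rather than any deep difficulty, is where care is needed.
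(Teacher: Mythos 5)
Your proposal is correct and follows essentially the same route as the paper: constant transfers (Lemma~\ref{lem:easy}) sweep out the full hyperplane through any Pareto optimal image point, minimality of Pareto optimal images (Lemma~\ref{lem:min}) forces all of them onto a single such hyperplane, and any minimizer of~\eqref{eq:min} is Pareto optimal because the weights $1/|1+b_i+c_i|$ are strictly positive. The only difference is presentational: you make explicit the sign/orthant argument behind the paper's terser ``two parallel hyperplanes cannot both be minimal'' step (where, as you note, which of the two allocations gets dominated depends on the common sign of the $1+b_i+c_i$), and you flag the existence-of-a-minimizer issue that the paper leaves implicit.
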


\begin{proof}
We begin by showing that if $\X \in \cA$ minimizes the expression in (\ref{eq:min}), then $\X$ is Pareto optimal.  Suppose that $\Y \in \cA$ is such that $V_i(Y_i) \le V_i(X^*_i)$ for $\fori$.  Then, $\sum_{i=1}^n V_i(Y_i)/ \big|1 + b_i + c_i \big| \le \sum_{i=1}^n V_i(X^*_i)/ \big|1 + b_i + c_i \big|$, from which it follows that $\sum_{i=1}^n V_i(Y_i)/ \big|1 + b_i + c_i \big| = \sum_{i=1}^n V_i(X^*_i)/ \big|1 + b_i + c_i \big|$ because $\X$ minimizes (\ref{eq:min}).  Therefore, $V_i(Y_i) = V_i(X^*_i)$ for $\fori$, and $\X$ is Pareto optimal.

Next, suppose ${\bf x} \in \R^n$ satisfies the equation of the hyperplane (\ref{eq:hyper}) for
some Pareto optimal allocation $\X \in \cA$.  Define $\beta_i := (x_i - V_i(X^*_i))/(1 + b_i +
c_i)$ for $\fori$; then, $\sum_{i=1}^n \beta_i = 0$.  Define ${\bf \hat X}^* := (X^*_1 +
\beta_1, X^*_2 + \beta_2, \dots, X^*_n + \beta_n) \in \cA$.   By the same argument as in the
proof of Lemma \ref{lem:easy}, one can show that ${\bf \hat X}^*$ is Pareto optimal.  Finally,
$F({\bf \hat X}^*) = (V_1(X^*_1 + \beta_1), V_2(X^*_2 + \beta_2), \dots, V_n(X^*_n + \beta_n))
= F(\X) + (\beta_1(1 + b_1 + c_1), \beta_2(1 + b_2 + c_2), \dots, \beta_n(1 + b_n + c_n)) =
F(\X) + (x_1 - V_1(X^*_1), x_2 - V_2(X^*_2), \dots, x_n - V_n(X^*_n)) = {\bf x}$.  Thus, (any)
${\bf x}$ in \eqref{eq:hyper} is an image of a Pareto optimal allocation in $\cA$ via the
mapping $F$. As an aside, note that all elements of the hyperplane (\ref{eq:hyper}) give the
same minimum value in the expression (\ref{eq:min}).

To complete the proof, we need to show that the hyperplane (\ref{eq:hyper}) gives us all the
Pareto optimal allocations.  Suppose not; suppose that there is a Pareto optimal allocation
$\Y^* \in \cA$ that is mapped to a point not on the hyperplane (\ref{eq:hyper}).  Then, by the
argument in the above paragraph, any point ${\bf y} \in \R^n$ that satisfies $\sum_{i=1}^n
(V_i(Y^*_i) - y_i)/ \left(1 + b_i + c_i \right) = 0$ is the image of a Pareto optimal
allocation.  Thus, we have two parallel hyperplanes both purporting to be the image (under the
mapping $F$) of Pareto optimal allocations in $\cA$.  By Lemma \ref{lem:min}, only one of these
hyperplanes will be minimal, a contradiction.  Thus, the Pareto optimal allocations in $\cA$
correspond to points in the hyperplane (\ref{eq:hyper}). \qed \end{proof}

To describe Pareto optimal allocations corresponding to points in the hyperplane (\ref{eq:hyper}), it is easier to consider comonotone allocations.

\begin{definition}
An allocation ${\bf Y} \in \cA$ is called comonotone if $Y_i$ and $X$ are comonotone for $\fori$.
\end{definition}

Note that if $\bf Y$ is a comonotone allocation then any two $Y_i$ and $Y_j$ are also  pairwise
comonotone.  Ludkovski and R\"uschendorf \cite[Proposition 1]{LR07} shows that for $V_i$
preserving the convex order, any integrable non-comonotone allocation ${\bf X} \in \cA$, $X_i
\in L^1(\P)$ is dominated by some comonotone $ \bf X^*$, $V_i( X^*_i) \le V_i( X_i)$, $\fori$.
This result is essentially based on the comonotone $\le_{cx}$-improvement result of Landsberger
and Meilijson~\cite{LM94}. Note that the requirement $X_i \in L^1$ is automatically satisfied
since we already assume that $\E X_i \le H_{g_i}(X_i) < \infty$.  Thus, Pareto optimal
allocations are comonotone.

For a comonotone allocation ${\bf X} = (f_1(X), f_2(X), \dots, f_n(X))$,
Denneberg~\cite[Proposition 4.5]{Denneberg94} shows that the functions $f_i$ are continuous on
$supp(X)$ for $\fori$. Moreover, he shows that $f_i$ may be extended to continuous functions on
the entire real line such that $\sum_{i=1}^n f_i(x) = x$ for all $x \in \R$.  It follows that
we can restrict our attention to finding Pareto optimal allocations in
\begin{multline}\label{eq:4.8}
\cC \triangleq \{(f_1(X), f_2(X), \dots, f_n(X)) \in \cA\colon \\   f_i \text{ cont., non-decreasing}, \; \sum_{i = 1}^n f_i(x) = x \hbox{ for } x \in \R \}.
\end{multline}
Comonotonicity implies that an optimal risk allocation necessarily satisfies the mutuality principle, whereby the share of each agent depends only on the total risk $X$. We now use the above results to explicitly characterize the Pareto optimal allocations.
\begin{theorem}\label{thm:Po}
Suppose $(1 + b_i + c_i)(1 + b_j + c_j) > 0$ for all $i, j = 1, 2, \dots, n$.  Then, $\X = (f^*_1(X), f^*_2(X), \dots, f^*_n(X)) \in \cC$ is a Pareto optimal allocation if and only if
\begin{equation}\label{eq:optI}
\sum_{i \in {\mathcal I}} (f^*_i)'(t) = 1 \hbox{ for } \; {\mathcal I} = \hbox{\rm argmin}_{k = 1, 2, \dots, n} \; \frac{(1 + b_k)g_k(S_X(t)) + c_k S_X(t)}{ \big| 1 + b_k + c_k \big|},
\end{equation}
\noi and $(f^*_i)'(t) = 0$ otherwise.
\end{theorem}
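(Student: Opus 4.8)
The plan is to reduce the problem, via Theorem~\ref{thm:F}, to minimizing the single linear functional $\sum_{i=1}^n V_i(f_i(X))/|1+b_i+c_i|$ over comonotone allocations in $\cC$, and then to compute this objective explicitly using the quantile representation \eqref{eq:2.1} of the distortion risk measures. First I would write, for a comonotone allocation $(f_1(X),\dots,f_n(X))\in\cC$, the decumulative distribution of each share: since $f_i$ is non-decreasing, $S_{f_i(X)}(t) = \P(f_i(X)>t)$, and more usefully one computes $H_{g_i}(f_i(X)) = \int_{-\infty}^0 (g_i[S_{f_i(X)}(t)]-1)\,dt + \int_0^\infty g_i[S_{f_i(X)}(t)]\,dt$. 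The key manipulation is to change variables from the range of $f_i$ back to the range of $X$: because $f_i$ is continuous and non-decreasing with $f_i' $ existing a.e., one rewrites the $H_{g_i}$ and $\E$ terms as integrals against $x$ over $\R$, picking up a factor $f_i'(x)$ in each. This converts $V_i(f_i(X))$ into $\int_\R \big[(1+b_i)g_i(S_X(x)) + c_i S_X(x)\big] f_i'(x)\,(\text{appropriate }dx)$ plus a term independent of the allocation (handling the $-1$ in the negative-axis part of \eqref{eq:2.1}, which integrates the constant $f_i'$ contributions that sum to~$1$).

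Next I would assemble the total objective $\sum_i V_i(f_i(X))/|1+b_i+c_i|$ into a single integral of the form $\int_\R \sum_{i=1}^n \frac{(1+b_i)g_i(S_X(t)) + c_i S_X(t)}{|1+b_i+c_i|}\, f_i'(t)\, d\mu(t)$ (up to an allocation-independent constant), where at each point $t$ the weights $f_i'(t)$ are non-negative and sum to~$1$ because $\sum_i f_i(x)=x$. This is now a pointwise minimization: for fixed $t$, minimizing $\sum_i w_i(t) f_i'(t)$ subject to $f_i'(t)\ge 0$, $\sum_i f_i'(t)=1$, where $w_k(t) := \frac{(1+b_k)g_k(S_X(t)) + c_k S_X(t)}{|1+b_k+c_k|}$. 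The minimum is attained precisely by putting all the weight on the indices achieving $\min_k w_k(t)$, i.e.\ on $\mathcal{I} = \mathrm{argmin}_k w_k(t)$, which is exactly condition~\eqref{eq:optI}. The ``if'' direction then follows because any $\X$ satisfying \eqref{eq:optI} realizes the pointwise minimum a.e., hence the global minimum of \eqref{eq:min}, hence is Pareto optimal by Theorem~\ref{thm:F}; the ``only if'' direction follows because any Pareto optimal allocation must (after the reduction to $\cC$) also minimize \eqref{eq:min}, and if \eqref{eq:optI} failed on a positive-measure set one could strictly decrease the objective by reallocating $f_i'$ onto the argmin there, contradicting minimality.

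The main obstacle I expect is the rigorous justification of the change-of-variables step and the interchange of summation and integration when some of the relevant integrals are only conditionally convergent or when $X$ is unbounded --- in particular, making sure the ``allocation-independent constant'' is genuinely finite and independent of $(f_i)$, and that $\int_{-\infty}^0$ and $\int_0^\infty$ pieces can be recombined. This is where the continuity axiom~(e) and the standing assumption $H_{g_i}(X_i)<\infty$ (hence $\E X_i <\infty$) must be invoked: one should approximate by truncations $\max(\min(X,d),-d)$, establish the formula for bounded $X$ where everything is a finite integral over a compact interval, and pass to the limit using~(e). A secondary technical point is that one only controls Pareto optima within $\cC$; the reduction from general allocations to comonotone ones (via the Ludkovski--R\"uschendorf/Landsberger--Meilijson argument quoted before \eqref{eq:4.8}) must be cited carefully to conclude that \eqref{eq:optI} characterizes \emph{all} Pareto optimal allocations up to the equivalence of giving the same $F$-value. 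The pointwise optimization itself and the ``if/only if'' logic are routine once the integral representation of the objective is in hand.
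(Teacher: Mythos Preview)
Your proposal is correct and follows essentially the same route as the paper: reduce via Theorem~\ref{thm:F} to minimizing the single weighted sum~\eqref{eq:min} over $\cC$, rewrite each $V_i(f_i(X))$ as an integral in $t$ with integrand proportional to $f_i'(t)$, and then minimize pointwise over the simplex $\{f_i'(t)\ge 0,\ \sum_i f_i'(t)=1\}$. The paper streamlines the calculation in two small ways you may wish to adopt: it first normalizes $f_i^*(0)=0$ via Lemma~\ref{lem:easy} (so only $\int_0^\infty$ appears and your ``allocation-independent constant'' disappears), and it reaches the integral form by using the quantile representation $H_g(f(X))=\int_0^1 f(S_X^{-1}(p))\,dg(p)$ together with an integration by parts, rather than by changing variables in the survival-function form of~\eqref{eq:2.1}; this sidesteps most of the convergence worries you flag.
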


\begin{proof}
From Theorem \ref{thm:F} and \cite{LR07}, we know that Pareto optimal allocations correspond to minimizers $\X \in \cC$  of the expression in (\ref{eq:min}).  As discussed after the proof of Lemma \ref{lem:easy} without loss of generality, suppose that the Pareto optimal allocation $\X = (f^*_1(X), f^*_2(X), \dots, f^*_n(X))$ is such that $f^*_i(0) = 0$ for $\fori$.

Suppose $Y = f(X)$ for a continuous, non-decreasing real-valued function $f$ on $\R_+$ with $f(0) = 0$; then,
\begin{equation}\label{eq:int-by-parts}
\begin{split}
(1 + b) & H_g(Y) + c \E Y = (1 + b) \int_0^1 S^{-1}_{f(X)} (p) \, dg(p) + c \int_0^1 S^{-1}_{f(X)} (p) \, d(p)  \\
& = (1 + b) \int_0^1 f \left[ S^{-1}_X(p) \right] \, dg(p) + c \int_0^1 f \left[ S^{-1}_X(p) \right] \, d(p) \\
& = (1 + b) \int_0^\infty g \left[ S_X(t) \right] \, df(t) + c \int_0^\infty S_X(t) \, df(t) \\
& = \int_0^\infty \left[ (1 + b) g + c \right] (S_X(t)) \, df(t),
\end{split}
\end{equation}
\noi in which the function $(1 + b) g + c$ is defined on $[0, 1]$ by $[(1 + b) g + c](p) = (1 + b) g(p) + cp$.  Thus, minimizing expression (\ref{eq:min}) is equivalent to minimizing
\begin{equation}\label{eq:tranches}
\sum_{i=1}^n \int_0^\infty \frac{ \left[ (1 + b_i) g_i + c_i \right] (S_X(t))}{ \big| 1 + b_i + c_i \big| } \, df_i(t),
\end{equation}
\noi which is minimized by setting $\sum_{i \in {\mathcal I}} (f^*_i)'(t) = 1$ for \hfill
\break ${\mathcal I} = \hbox{\rm argmin}_{k = 1, 2, \dots, n} \; \left\{(1 + b_k)g_k(S_X(t)) +
c_k S_X(t) \right\}/ \big| 1 + b_k + c_k \big|$, and by setting $(f^*_i)'(t) = 0$ otherwise.
\qed \end{proof}

The above theorem implies that under a Pareto optimal allocation, the risk sharing consists of
``tranches''  where the risk of each tranche is entirely borne by one agent (ignoring equality
in the argmin). As expression \eqref{eq:optI} shows, the optimal allocation $Y^*_i$ of the
$i$-th agent consists of a series of laddered European options on the total risk $X$. Hence,
agent $i$ assumes total responsibility for risk levels where $f^*_i( S^{-1}_X(t) ) = 1$, and
receives full insurance otherwise. Such risk sharing arrangements are observed in practice in
credit derivatives, where the total risk $X$ represents a bond portfolio subject to default
risk and the corresponding risk is allocated via credit tranches. These credit tranches can be
viewed as optimal insurance contracts for a set of representative investors with varying risk
measures.

\begin{remark} The problem considered in this section has a long history in the context of reinsurers determining the best way to allocate  risk among them.   Borch~\cite{Borch62} shows that if the reinsurers seek to maximize their expected utility of wealth, then the allocation is related to their absolute risk aversions, in which the absolute risk aversion associated with a utility function $u$ is $-u''/u'$.
B\"uhlmann~\cite{Buhlmann80,Buhlmann84} extends Borch's work by developing premium rules
associated with such risk sharing. The connection between second order stochastic dominance and
optimality of deductible insurance was already noted in \cite{heerwaarden} and
\cite{Gollier96}.
\end{remark}

\begin{remark}\label{keyRemark}
Theorem \ref{thm:F} and the reduction to comonotone allocations are key steps in our argument
since they dramatically simplify the structure of Pareto optimal allocations.  Note that the
only property used in the proof of Theorem \ref{thm:F}  was the cash equivariance of the
corresponding risk measures, while the only property used in relation to the comonotonicity
improvement of Proposition 1 in Ludkovski and R\"uschendorf~\cite{LR07} was consistency of $H$
and $\le_{cx}$. On the other hand, B\"auerle and M\"uller~\cite{BauerleMuller06} show that any
law-invariant convex risk measure, subject to a mild continuity requirement, is consistent with
the convex order $\le_{cx}$.  We, therefore, hypothesize that the conclusion of Theorem
\ref{thm:Po} will hold for arbitrary law-invariant convex risk measures. This conjecture would
further extend  the setting of Jouini et al.~\cite{JST}.
\end{remark}


\section{Constrained Risk Sharing}\label{sect:constraint}

We next consider the related situation for which the risk sharing is subject to regulation.
This may arise, for example, in an insurance setting where the risk transfer from buyer to
insurer is controlled by a government regulator, or in a financial setting where the party
taking on risk is subject to a risk management framework, such as Basel II.

The effect of such regulation is to impose further constraints upon some of the $Y_i$'s in
\eqref{eq:min}. This of course modifies the resulting Pareto optimal allocations since some of
the possible optima become infeasible under the constraint. A similar model was studied by
Bernard and Tian~\cite{BernardTian08} under the assumption of a VaR constraint.   In our
framework where we work with distortion risk measures, we instead postulate constraints of the
form
$$
H_{h_i}(Y_i) \le B_i, \qquad \fori,
$$
in which $H_{h_i}$ is the regulator's (convex) risk measure on the final risk transfer amount
$Y_i$, and $B_i$ is the corresponding risk threshold for agent $i$.

We modify the set of allocations $\cA$ to account for these constraints.  Define the set of {\it constrained} allocations by
$$
{\mathcal A}^c(X) \triangleq \{ {\bf Y} := (Y_1, Y_2, \dots, Y_n): X = \sum_{i = 1}^n Y_i, \; V_i(Y_i) \hbox{ finite}, \; H_{h_i}(Y_i) \le B_i \}.
$$
We assume that the set of feasible allocations in ${\mathcal A}^c(X)$ is non-empty.  Analogous to Definition \ref{def:Po}, ${\bf X}^* \in {\mathcal A}^c(X)$ is a {\it constrained} Pareto optimal allocation  if whenever there exists an allocation ${\bf Y} \in {\mathcal A}^c(X)$ such that $V_i(Y_i) \le V_i(X^*_i)$ for all $\fori$, then $V_i(Y_i) = V_i(X^*_i)$ for all $\fori$.


The next lemma shows that as in Section \ref{sect:pareto} for unconstrained Pareto optimal
allocations, without loss of generality we can restrict our attention to constrained Pareto
optimal allocations that are comonotone.

\begin{lemma}\label{lem:constraint-co}
If ${\bf Y} \in {\mathcal A}^c(X)$, then there exists ${\bf Y}' \in {\cC} \cap {\mathcal
A}^c(X)$ that improves it in the partial ordering of Section \ref{sect:pareto}.
\end{lemma}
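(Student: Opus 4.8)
\textit{Proof proposal.} The plan is to re-use the comonotone improvement already invoked in Section~\ref{sect:pareto} and then to check that the regulatory constraints are not destroyed by passing to the comonotone allocation. The key observation is that this improvement takes place in the convex order, componentwise, and that \emph{every} functional in the problem --- the agents' objectives $V_i$ and the regulator's risk measures $H_{h_i}$ alike --- respects $\le_{cx}$.

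First I would record that each $Y_i$ is integrable: since $V_i(Y_i)$ is finite and, by property (c) of $H_{g_i}$, $\E Y_i \le H_{g_i}(Y_i)$, we get $-\infty < \E Y_i \le H_{g_i}(Y_i) < \infty$, so $Y_i \in L^1(\P)$. Hence Proposition~1 of Ludkovski and R\"uschendorf~\cite{LR07}, which rests on the comonotone $\le_{cx}$-improvement of Landsberger and Meilijson~\cite{LM94}, applies and produces a comonotone allocation ${\bf Y}' = (f_1(X), \dots, f_n(X))$ of $X$ with $Y_i' \le_{cx} Y_i$ for $\fori$; by Denneberg's representation recalled before \eqref{eq:4.8} the $f_i$ may be taken continuous, non-decreasing, with $\sum_i f_i(x) = x$ on $\R$. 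Since convex order preserves means, $\E Y_i' = \E Y_i$, and then $\E Y_i' \le H_{g_i}(Y_i') \le H_{g_i}(Y_i) < \infty$ shows $V_i(Y_i')$ is finite, so ${\bf Y}' \in \cC \cap \cA$.

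Next I would verify the improvement. Because $1 + b_i \ge 1 > 0$, property (c) gives $H_{g_i}(Y_i') \le H_{g_i}(Y_i)$, and combined with $\E Y_i' = \E Y_i$ this yields
\[
V_i(Y_i') = (1+b_i)H_{g_i}(Y_i') + c_i \E Y_i' \le (1+b_i)H_{g_i}(Y_i) + c_i \E Y_i = V_i(Y_i)
\]
for $\fori$; note the sign of $c_i$ is irrelevant here, precisely because $\le_{cx}$ preserves expectations. Thus $F({\bf Y}') \le F({\bf Y})$, i.e.\ ${\bf Y}'$ improves ${\bf Y}$ in the partial ordering of Section~\ref{sect:pareto}.

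The only step that requires genuine care --- and hence the main obstacle --- is feasibility: one must show ${\bf Y}' \in {\mathcal A}^c(X)$, that is, $H_{h_i}(Y_i') \le H_{h_i}(Y_i) \le B_i$, which amounts to knowing that the regulator's risk measures preserve the convex order. If each $H_{h_i}$ is itself a distortion risk measure this is once more property (c); in the more general case of a law-invariant convex risk measure one invokes B\"auerle and M\"uller~\cite{BauerleMuller06} (compare Remark~\ref{keyRemark}), at the mild cost of a continuity hypothesis on $H_{h_i}$. Either way $Y_i' \le_{cx} Y_i$ forces $H_{h_i}(Y_i') \le B_i$ for $\fori$, so ${\bf Y}' \in \cC \cap {\mathcal A}^c(X)$, completing the argument.
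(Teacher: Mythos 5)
Your proposal is correct and follows essentially the same route as the paper: apply the comonotone $\le_{cx}$-improvement of Ludkovski and R\"uschendorf (Proposition 1), observe that the $V_i$ preserve the convex order so the improvement holds in the partial ordering, and observe that the $H_{h_i}$ also preserve the convex order so feasibility is retained. Your added details (the $L^1$ check via $\E Y_i \le H_{g_i}(Y_i)$, and the explicit remark that $\le_{cx}$ preserves means so the sign of $c_i$ is harmless) are points the paper leaves implicit but are entirely consistent with its argument.
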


\begin{proof}
Ludkovski and R\"uschendorf~\cite[Proposition 1]{LR07} show that given an arbitrary allocation
${\bf Y} \in {\mathcal A}^c(X) \subset \cA$, there is a comonotone improvement in the
stochastic convex order ${\bf Y}' \in \cC$, that is, $Y_i' \le_{cx} Y_i$  for $\fori$.
Therefore, the allocation ${\bf Y}'$ improves $\bf Y$ in the partial ordering of Section
\ref{sect:pareto} because $V_i$ preserves the convex order for $\fori$.  Moreover, because
$H_{h_i}$ also preserves the convex order, it follows that $H_{h_i}(Y'_i) \le H_{h_i}(Y_i)$ for
$\fori$ and ${\bf Y}' \in {\mathcal A}^c(X)$ is still feasible. Thus, ${\bf Y}' \in \cC \cap
{\mathcal A}^c(X)$. \qed
\end{proof}

Note that if the constraining risk measure is not convex, then optimal allocations might not be
comonotone. For instance, a VaR constraint at level $\alpha\%$ corresponds to the non-concave
distortion function $h(p)=1_{\{p > \alpha\}}$. Such $H_h$ is not consistent with the
$\le_{cx}$-order, and therefore Lemma \ref{lem:constraint-co} does not apply. Indeed, as
explicitly shown by Bernard and Tian~\cite{BernardTian08}, the resulting optimal allocation
might fail to be comonotone.

By using Lemma \ref{lem:constraint-co}, we reduce the constrained problem to the same situation as
in Theorem \ref{thm:Po}.
\begin{theorem}\label{thm:constraint-Po}
The optimal risk allocation for the constrained problem is obtained by finding minimizers of
\begin{equation}\label{eq:constrained-min}
 \sum_{i=1}^n \int_0^\infty \frac{ \left[ (1 + b_i) g_i + \lambda_i h_i + c_i\right]
(S_X(t))}{ \big| 1 + b_i + c_i +\lambda_i\big| } \, df_i(t),
\end{equation}
in which $\la_i \ge 0$ is a Lagrange multiplier for the $i$-th constraint, for $\fori$.
\end{theorem}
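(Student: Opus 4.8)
The plan is to reduce the constrained Pareto optimization to the unconstrained one treated in Theorem~\ref{thm:Po} by a standard Lagrangian argument, and then recycle the comonotone-reduction and integration-by-parts machinery already in place. First I would invoke Lemma~\ref{lem:constraint-co} to restrict attention to comonotone allocations, so that any feasible $\mathbf{Y}\in\mathcal{A}^c(X)$ may be written as $(f_1(X),\dots,f_n(X))\in\cC$ with $\sum_i f_i(x)=x$; as in the proof of Theorem~\ref{thm:Po}, normalize $f_i(0)=0$ using Lemma~\ref{lem:easy}. Next I would form the Lagrangian for the constrained minimization of $\sum_i V_i(Y_i)/|1+b_i+c_i|$ subject to $H_{h_i}(Y_i)\le B_i$: since $V_i$ is convex and cash-equivariant and $H_{h_i}$ is a convex risk measure, the feasible set is convex and the objective is convex, so Slater's condition (guaranteed by the standing assumption that the feasible set in $\mathcal{A}^c(X)$ is non-empty, together with cash-equivariance which lets one strictly relax any binding constraint by subtracting a constant and compensating elsewhere) yields strong duality and the existence of multipliers $\lambda_i\ge 0$ such that a constrained Pareto optimum minimizes
\[
\sum_{i=1}^n \frac{V_i(Y_i) + \lambda_i H_{h_i}(Y_i)}{|1+b_i+c_i|}
\]
over $\cC$, with complementary slackness $\lambda_i(H_{h_i}(Y_i)-B_i)=0$.

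The second step is to rewrite each summand in the form already handled. Recall $V_i(Y_i)=(1+b_i)H_{g_i}(Y_i)+c_i\E Y_i$, so the numerator becomes $(1+b_i)H_{g_i}(Y_i)+\lambda_i H_{h_i}(Y_i)+c_i\E Y_i$. Applying the integration-by-parts identity \eqref{eq:int-by-parts} separately to the $H_{g_i}$, $H_{h_i}$, and $\E$ terms (each is of the form $\int_0^\infty \varphi(S_X(t))\,df_i(t)$ for the appropriate distortion $\varphi$, valid since $f_i$ is continuous, non-decreasing, and vanishes at $0$), and using linearity, the numerator equals $\int_0^\infty[(1+b_i)g_i+\lambda_i h_i+c_i](S_X(t))\,df_i(t)$. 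Here I must check that the denominator is indeed $|1+b_i+c_i+\lambda_i|$: this is because the coefficient of the identity distortion $p\mapsto p$ in the combined distortion $(1+b_i)g_i+\lambda_i h_i+c_i$ is $1+b_i+\lambda_i+c_i$ (using $g_i(1)=h_i(1)=1$, $g_i(0)=h_i(0)=0$), which plays for the rescaled problem exactly the role $1+b_i+c_i$ played in Theorem~\ref{thm:Po}; equivalently, the "no-trade" sensitivity of $V_i+\lambda_i H_{h_i}$ to a cash shift is $1+b_i+c_i+\lambda_i$, which is positive when $1+b_i+c_i>0$ since $\lambda_i\ge 0$. Assembling, the constrained optimum minimizes \eqref{eq:constrained-min}, as claimed, and the pointwise "argmin over tranches" characterization of Theorem~\ref{thm:Po} transfers verbatim with $g_k$ replaced by $(1+b_k)g_k+\lambda_k h_k+c_k$ and the denominator by $|1+b_k+c_k+\lambda_k|$.

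The main obstacle I anticipate is the duality/Lagrangian step: justifying the existence of finite multipliers $\lambda_i$ and the equivalence of the constrained problem with the penalized one requires a genuine constraint-qualification argument in an infinite-dimensional, possibly non-reflexive setting ($\cP$ is only the space of a.s.-finite random variables), and one must argue that the multipliers can be chosen uniformly across the $n$ coupled constraints while preserving the comonotone structure. I would handle this by first solving the penalized problem for each fixed vector $\lambda=(\lambda_1,\dots,\lambda_n)\ge 0$ via the explicit tranche formula, obtaining optimal $f_i^{*,\lambda}$, then choosing $\lambda$ to satisfy complementary slackness $\lambda_i(H_{h_i}(f_i^{*,\lambda}(X))-B_i)=0$ — existence of such $\lambda$ following from monotonicity of $H_{h_i}(f_i^{*,\lambda}(X))$ in $\lambda_i$ together with the non-emptiness of the feasible set — and finally verifying directly (without appealing to abstract duality) that the resulting allocation is constrained Pareto optimal by the same minimality argument used in Theorem~\ref{thm:F}. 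A minor technical point to dispatch along the way is that \eqref{eq:int-by-parts} was stated for a single distortion; but since it is just Fubini/integration by parts applied termwise, extending it to the finite linear combination $(1+b_i)g_i+\lambda_i h_i+c_i$ is immediate.
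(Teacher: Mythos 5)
Your proposal is correct and follows essentially the route the paper intends: reduce to comonotone allocations via Lemma~\ref{lem:constraint-co}, penalize the constraints with multipliers $\lambda_i\ge 0$, and apply the integration-by-parts identity \eqref{eq:int-by-parts} to the combined distortion $(1+b_i)g_i+\lambda_i h_i+c_i$, with the cash-sensitivity $1+b_i+c_i+\lambda_i$ supplying the denominator (the paper gives no formal proof of this theorem, but this is exactly the argument it carries out for the analogous single-agent Theorem~\ref{thm:single-agent-constraint}). Your extra care about constraint qualification --- solving the penalized problem for fixed $\lambda$ and then choosing $\lambda$ by complementary slackness rather than invoking abstract duality --- is precisely how the paper determines $\lambda$ in its worked examples, so no gap remains.
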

\noindent It follows from Theorem \ref{thm:constraint-Po}, that we, again, will obtain a
ladder-like optimal contract structure, similar to the tranches in \eqref{eq:tranches}. Many
cases are possible with respect to which of the $\lambda_i$'s are positive (that is, the
respective constraint binds) versus zero.  In particular, a variety of degeneracies might arise
if several constraints bind simultaneously.  Instead of considering all these cases for an
arbitrary $n$, in Sections \ref{sect:ex-con} and \ref{sect:buyer} we focus on a simple example
with two agents and one constraint, a setting already taken up in \cite{BernardTian08}.

\section{The Special Case of $n = 2$ Agents}\label{sect:n=2}

In this section, we specialize our results to the case for which we have two agents.  Suppose
an individual (agent 2) is facing an insurable random loss $X_2 = X$ and wants to buy insurance
$f(X)$ for all or part of the loss $X$ from an insurer (agent 1 with $X_1 = 0$). In this case,
our problem amounts to finding a Pareto optimal allocation $(f^*(X), X - f^*(X))$, in which
$f^*(X)$ is the insurer's share of the risk $X$, and $X - f^*(X)$ is the amount of the risk
retained by the individual. Arrow~\cite{Arrow63} showed that if the premium equals $(1 +
\theta) \E f(X)$ with $\theta > 0$ and if the individual seeks to maximize his or her expected
utility of wealth, then $f^*(X)$ is deductible coverage (that is, $f^*(X)$ is given
functionally by $f^*(x) = (x - d)_+$ for some $d \ge 0$, in which $x$ is a specific value of
the random loss $X$).  One could view this risk exchange as Pareto optimal if the insurer's
goal were to maximize its expected profits (among other possible criteria).  For more recent
work in the area of optimal insurance, see Promislow and Young~\cite{PromislowYoung05} who
extended the work of Arrow to other premium rules and optimality criteria.

We first examine the case for which $1 + b_1 + c_1 = 0 = 1 + b_2 + c_2$.  Then, we consider the case for which $(1 + b_1 + c_1)(1 + b_2 + c_2) > 0$.

\subsection{$1 + b_1 + c_1 = 0 = 1 + b_2 + c_2$}

In this case, we have $c_1 = -(1 + b_1)$ and $c_2 = -(1 + b_2)$.  It follows from arguments similar to those in Section \ref{sect:pareto} that the Pareto optimal risk exchanges are given as the minimizers over $\Y \in \cC$ of the following expression as $\lambda_1$ and $\lambda_2$ range over the non-negative reals:
\begin{equation}\label{eq:argmin}
\lambda_1 (1 + b_1) \left[ H_{g_1}(Y_1) - \E Y_1 \right] + \lambda_2 (1 + b_2) \left[ H_{g_2}(Y_2) - \E Y_2 \right],
\end{equation}

\noi with at least one of $\lambda_1$ and $\lambda_2$ strictly positive.  Without loss of generality, suppose $\lambda_1 > 0$.  Also, note that $Y_2 = X - Y_1$; let $f(X)$ denote $Y_1$.  Then, the Pareto optimal risk exchanges are the minimizers over real-valued, continuous, non-decreasing functions $f$, with $H_{g_i}(f(X))$ finite for $i = 1, 2$, of the following expression as $\delta$ ranges over the non-negative reals:
\begin{equation}\label{eq:argmin2}
\left[ H_{g_1}(f(X)) - \E f(X) \right] + \delta \left[ \E f(X) - H_{g_2}(f(X)) \right].
\end{equation}

Without loss of generality, we can assume that $f(0) = 0$; otherwise, define $\hat f$ by $\hat f(x) - f(0)$ and note that $H_{g_i}(\hat f(X)) - \E \hat f(X) = H_{g_i}(f(X) - f(0)) - \E (f(X) - f(0)) = H_{g_i}(f(X)) - \E f(X)$ for $i = 1, 2$.

By following the argument of Theorem \ref{thm:Po}, the derivative of the optimal function $f^*$ is given by
\begin{equation}\label{eq:4.1}
(f^*)'(t) =
\begin{cases}
1, & \hbox{if } g_1(S_X(t)) - S_X(t) < \delta  \left[ g_2(S_X(t)) - S_X(t) \right]; \\
\beta, & \hbox{if } g_1(S_X(t)) - S_X(t) = \delta  \left[ g_2(S_X(t)) - S_X(t) \right]; \\
0,  & \hbox{otherwise},
\end{cases}
\end{equation}

\noi in which $\beta \in [0, 1]$ is arbitrary.  If we interpret $g(S_X(t)) - S_X(t)$ as the marginal cost of adding more risk (except for the factor of $1 + b$), then $f^*$ increases if the marginal cost for the insurer is less than the marginal cost for the buyer adjusted by the factor $\delta \ge 0$.

Note that if $0 \le \delta_1 < \delta_2$, then $f^*_{\delta_1} \le f^*_{\delta_2}$, in which $f^*_{\delta_i}$ corresponds to the minimizer of (\ref{eq:argmin2}) for $\delta = \delta_i$, $i = 1, 2$.  In other words, as the weight given to buyer's risk preference increases, then the insurer assumes more of the risk.

In the special case for which $\delta = 0$, we seek to minimize $H_{g_1}(f(X)) - \E f(X)$ which is greater than or equal to $0$ because $g_1$ is concave.  Thus, $H_{g_1}(f(X)) - \E f(X)$ is minimized by $f^* \equiv r$ for any constant $r$.  If $g_1$ is strictly concave, then this expression is minimized {\it uniquely} (up to an additive constant) by $f^* \equiv 0$.  If $g_1$ is not strictly concave, then for illustrative purposes, suppose $g_1$ is given by AVaR, specifically $g_1(p) = \min(\alpha p, 1)$ for some $\alpha > 1$.  Then, for $X \sim Bernoulli(q)$ for some $q > 1/\alpha$, the function $f$ given by $f(0) = r$ and $f(1) = r + 1$ is such that $H_{g_1}(f(X)) - \E f(X) = 0$ for any $r \in \R$.  That is, if $g_1$ is not strictly concave, then the minimizer of $H_{g_1}(f(X)) - \E f(X)$ is not necessarily unique.

In general, if the distortions are not strictly concave, then it is possible that $g_1(S_X(t)) - S_X(t) = \delta  \left[ g_2(S_X(t)) - S_X(t) \right]$ on a set of positive measure, in which case, $f^*$ will not be unique.

We leave the case for which $1 + b_1 + c_1 = 0 = 1 + b_2 + c_2$ because as the reader will see in the next section, the conclusions that we could draw further from equation \eqref{eq:4.1} are similar to the ones we will draw from equation (\ref{eq:4.2}) below.

\subsection{$(1 + b_1 + c_1)(1 + b_2 + c_2) > 0$}\label{sec:ex}

Let $f(X)$ be the random indemnity that the insurer (agent 1) pays to the buyer (agent 2) in exchange for a premium of $(1 + \theta) \E f(X)$ for some $\theta > 0$, with $f(X)$ and $X - f(X)$ comonotone.

For concreteness, in the notation of this paper, set $a_1 = 0,$ $b_1 > 0,$ $c_1 = -(1 +
\theta)$ and $a_2 = (1 + \theta) \E X$,  $b_2 = 0,$ $c_2 = -(1 + \theta)$. Thus, the condition $(1 + b_1 + c_1)(1 + b_2 + c_2) > 0$ is equivalent to $b_1 < \theta$.

Under these values for the parameters, the rationality constraint for the insurer in $(\ref{eq:prem})$ becomes
\begin{equation}\label{eq:insurer}
(1 + \theta) \E f(X) \ge (1 + b_1) H_{g_1}(f(X));
\end{equation}

\noi that is, the insurer is willing to enter into a contract for which the premium $(1 + \theta) \E f(X)$ is at least as great as the risk-adjusted cost, as measured by $(1 + b_1) H_{g_1}(f(X))$.  The rationality constraint for the buyer becomes \begin{equation}\label{eq:buyer}
H_{g_2}(f(X)) \ge (1 + \theta) \E f(X);
\end{equation}

\noi that is, the risk-adjusted benefit for the buyer from receiving $f(X)$  is greater than the cost $(1 + \theta) \E f(X)$.

It is reasonable to assume that the buyer is ``more risk averse'' than the insurer in the sense that the buyer's distortion function is a concave transformation of the insurer's, or equivalently, $g_2 \ge g_1$. Theorem \ref{thm:Po} then implies that the optimal function $f^*$ is given by
\begin{equation}\label{eq:4.2}
(f^*)'(t) =
\begin{cases}
1, & \hbox{if } g_1(S_X(t)) - S_X(t) < \frac{\theta - b_1}{ \theta(1 + b_1)}  \left[ g_2(S_X(t)) - S_X(t) \right]; \\
\beta, & \hbox{if } g_1(S_X(t)) - S_X(t) = \frac{\theta - b_1}{  \theta(1 + b_1)} \left[ g_2(S_X(t)) - S_X(t) \right]; \\
0,  & \hbox{otherwise}.
\end{cases}
\end{equation}

\noi in which $\beta \in [0, 1]$ is arbitrary.  The function $f^*$ in equation (\ref{eq:4.2}) is similar in form to the one given in (\ref{eq:4.1}), with the arbitrary $\delta \ge 0$ replaced by the fixed $0 < (\theta - b_1)/(\theta(1 + b_1)) < 1$.

From the expression in (\ref{eq:4.2}), we can deduce several conclusions.  Because $(\theta - b_1)/\theta$ increases as $\theta$ increases, the optimal insurance $f^*$ increases as the proportional risk loading $\theta$ increases.  Also, because $(\theta - b_1)/(1 + b_1)$ decreases as $b_1 < \theta$ increases, the optimal insurance $f^*$ decreases as the insurer's cost $b_1$ increases.  This makes sense because if the proportional cost of the insurer increases, as measured by $b_1$, then the insurer is willing to sell less insurance to the buyer.

If $g_2$ is replaced by a concave distortion $\hat g_2 \ge g_2$, then $f^*$ increases because $g_2(S_X(t)) - S_X(t)$ increases.  In other words, as the buyer of insurance  becomes more risk averse, then the buyer is willing to purchase more insurance at a given price.

We have the following proposition that tells us when the optimal insurance is deductible insurance.  We omit its proof because it is a straightforward application of the expression in (\ref{eq:4.2}).  Recall from the discussion following Lemma \ref{lem:easy} that without loss of generality, we can assume that $f^*(0) = 0$, and we do so in this proposition.

\begin{proposition}\label{prop:ded}
If $(g_1(p) - p)/(g_2(p) - p)$ increases for $p \in (0, 1)$, then deductible insurance is optimal, that is,
\begin{equation}\label{eq:dedcov}
f^*(x) = (x - d)_+
\end{equation}
is optimal with the deductible $d$ given by
\begin{equation}
d = \inf \left\{t: \frac{g_1(S_X(t)) - S_X(t)}{ g_2(S_X(t)) - S_X(t)} \le \frac{\theta - b_1}{ \theta(1 + b_1)}  \right\}.
\end{equation}
If no such $d$ exists, then $f^* \equiv 0$.
\end{proposition}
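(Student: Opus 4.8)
The plan is to use \eqref{eq:4.2} to show that the set of loss levels on which the insurer's marginal share $(f^*)'$ equals $1$ is an ``upper'' interval $(d,\infty)$, and then to recover $f^*$ by integrating. Write $c:=(\theta-b_1)/(\theta(1+b_1))$, which lies in $(0,1)$ as noted after \eqref{eq:4.2}, and put $\phi(p):=(g_1(p)-p)/(g_2(p)-p)$ for $p\in(0,1)$; the hypothesis presupposes that this ratio is well defined, so $g_2(p)>p$ on $(0,1)$, and since concavity together with $g_i(0)=0$, $g_i(1)=1$ forces $g_i(p)\ge p$ on $[0,1]$, we have $\phi\ge 0$. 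For $t$ with $S_X(t)\in(0,1)$ we may divide the inequalities in \eqref{eq:4.2} by $g_2(S_X(t))-S_X(t)>0$, obtaining $(f^*)'(t)=1$ if $\phi(S_X(t))<c$, $(f^*)'(t)=\beta$ (arbitrary) if $\phi(S_X(t))=c$, and $(f^*)'(t)=0$ if $\phi(S_X(t))>c$. For the remaining $t$, where $S_X(t)\in\{0,1\}$, both $g_1(S_X(t))-S_X(t)$ and $g_2(S_X(t))-S_X(t)$ vanish (using $g_i(p)\ge p$ and $g_1\le g_2$), so the strict inequalities in \eqref{eq:4.2} fail and $(f^*)'(t)$ is again free.

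The crux is one monotonicity remark. By hypothesis $\phi$ is non-decreasing on $(0,1)$, and $t\mapsto S_X(t)$ is non-increasing, so $t\mapsto\phi(S_X(t))$ is non-increasing; hence $T:=\{t:\phi(S_X(t))\le c\}$ is an up-set (if $t\in T$ and $t'>t$ then $t'\in T$) and therefore equals $(d,\infty)$ or $[d,\infty)$ with $d:=\inf T$, which is exactly the infimum in the statement (and $T=\varnothing$ precisely when no such $d$ exists). On $T$ the condition $\phi(S_X(t))\le c$ lets us take $(f^*)'(t)=1$ (choosing $\beta=1$ on any plateau $\{\phi(S_X(t))=c\}$, an admissible resolution of the indifference); off $T$, $\phi(S_X(t))>c$ forces $(f^*)'(t)=0$. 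Thus there is an optimal allocation with $(f^*)'=\mathbf{1}_{(d,\infty)}$.

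It then remains to integrate: with the normalization $f^*(0)=0$, legitimate by the discussion following Lemma \ref{lem:easy}, the relation $(f^*)'(t)=\mathbf{1}_{\{t>d\}}$ yields $f^*(t)=0$ for $t\le d$ and $f^*(t)=t-d$ for $t\ge d$, i.e.\ $f^*(x)=(x-d)_+$; and if $T=\varnothing$ then $(f^*)'\equiv 0$, so $f^*\equiv 0$. The one point needing care — the step I expect to be the main (though minor) obstacle — is the bookkeeping at the degenerate levels with $S_X(t)\in\{0,1\}$ and on a plateau $\{\phi(S_X(t))=c\}$ of positive measure: the levels with $S_X(t)=1$ form a lower interval on which slope $0$ is consistent with a deductible exceeding it, the levels with $S_X(t)=0$ form the tail $\{t\ge\esssup X\}$ on which the choice of slope does not affect $f^*(X)$, and the plateau is handled by taking slope $1$ there; all of these are legitimate because the proposition only asserts that \emph{some} deductible contract is optimal.
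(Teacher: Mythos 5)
Your proof is correct and is precisely the ``straightforward application of the expression in (\ref{eq:4.2})'' that the paper invokes when it omits the proof: dividing by $g_2(S_X(t))-S_X(t)>0$, using monotonicity of $\phi\circ S_X$ to see that the set where the insurer's slope is $1$ is an upper interval, and integrating from the normalization $f^*(0)=0$. The care you take with the degenerate levels $S_X(t)\in\{0,1\}$ and with plateaus of the ratio (resolved by the freedom in $\beta$, since only existence of an optimal deductible contract is claimed) is exactly the right bookkeeping and introduces no gap.
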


\noi Note that if $(g_1(p) - p)/(g_2(p) - p)$ increases for $p \in (0, 1)$, then $(g_1(S_X(t)) - S_X(t))/(g_2(S_X(t)) - S_X(t))$ decreases for $t \ge 0$.

Proposition \ref{prop:ded} is a generalization of Proposition 3.2 in Jouini et al.~\cite{JST}
who also obtained deductible insurance in the context of law-invariant convex risk measures. In
contrast to the proof presented here, their non-constructive method relies on convex duality
and only applies in the setting of $L^\infty(\P)$.

We have three corollaries to Proposition \ref{prop:ded} for special cases of distortion
functions.  First, we consider the proportional hazards transform; then, we consider AVaR;
finally, we consider the dual power distortion.  We omit their proofs because they follow
directly from showing that $(g_1(p) - p)/(g_2(p) - p)$ increases on $(0, 1)$.

\begin{corollary}
If $g_i(p) = p^{c_i}$ for $0 < c_2 < c_1 < 1,$ then deductible insurance is optimal.
\end{corollary}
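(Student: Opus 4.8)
The plan is to reduce everything to Proposition \ref{prop:ded}: since $g_i(p) = p^{c_i}$, it suffices to verify that the ratio $p \mapsto (g_1(p) - p)/(g_2(p) - p) = (p^{c_1} - p)/(p^{c_2} - p)$ is increasing on $(0,1)$. As a first simplification I would note that for $p \in (0,1)$ and $0 < c_2 < c_1 < 1$ the map $x \mapsto p^x$ is decreasing, so $p < p^{c_1} < p^{c_2}$; hence both numerator and denominator are strictly positive, and dividing through by $p$ rewrites the ratio as $(p^{c_1-1} - 1)/(p^{c_2-1}-1)$.

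Next I would substitute $p = e^{-u}$, $u \in (0,\infty)$, which is a decreasing bijection of $(0,1)$ onto $(0,\infty)$. Setting $\alpha_i := 1 - c_i$, so that $0 < \alpha_1 < \alpha_2 < 1$, the target ratio becomes $\psi(u) := (e^{\alpha_1 u} - 1)/(e^{\alpha_2 u} - 1)$, and the claim ``the ratio increases in $p$'' is equivalent to ``$\psi$ decreases in $u$ on $(0,\infty)$.''

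To establish that $\psi$ is decreasing I would use the monotone form of L'H\^opital's rule: write $\psi = F/G$ with $F(u) = e^{\alpha_1 u} - 1$ and $G(u) = e^{\alpha_2 u} - 1$, so that $F(0) = G(0) = 0$, $G' > 0$ on $(0,\infty)$, and $F'(u)/G'(u) = (\alpha_1/\alpha_2)\, e^{(\alpha_1 - \alpha_2)u}$ is decreasing in $u$ because $\alpha_1 - \alpha_2 < 0$; the ratio-monotonicity lemma then gives that $F/G$ is decreasing. (Equivalently, one computes the logarithmic derivative $\psi'/\psi = \alpha_1/(1 - e^{-\alpha_1 u}) - \alpha_2/(1 - e^{-\alpha_2 u})$ and observes that $\alpha \mapsto \alpha/(1 - e^{-\alpha u})$ is increasing, since $1 - e^{-\alpha u}(1 + \alpha u) > 0$ by $e^{\alpha u} > 1 + \alpha u$; hence $\psi'/\psi < 0$.) Reversing the substitution $p = e^{-u}$ yields the monotonicity hypothesis of Proposition \ref{prop:ded}, and the deductible form of $f^*$ with the stated $d$ follows at once.

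I do not expect any genuine obstacle here: the computation is elementary. The only points needing care are the bookkeeping of the direction of monotonicity under $p = e^{-u}$ (a decreasing change of variables reverses it), and stating the ratio-monotonicity / monotone L'H\^opital lemma precisely in the form used, so that the conclusion transfers cleanly to Proposition \ref{prop:ded}.
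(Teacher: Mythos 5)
Your proposal is correct and follows exactly the route the paper intends: the paper omits the proof, stating only that the corollary ``follows directly from showing that $(g_1(p)-p)/(g_2(p)-p)$ increases on $(0,1)$,'' which is precisely the reduction to Proposition \ref{prop:ded} that you carry out. Your substitution $p=e^{-u}$ and the monotone L'H\^opital (or logarithmic-derivative) verification correctly supply the omitted details, and the computed limits are consistent with the paper's remark that the ratio increases from $0$ to $(1-c_1)/(1-c_2)$.
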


\noi Moreover, for the proportional hazards transform, $(g_1(p) - p)/(g_2(p) - p)$ increases from $0$ to $(1 - c_1)/(1 - c_2) < 1$.  Therefore, if $(1 - c_1)/(1 - c_2) < (\theta - b_1)/(\theta(1 + b_1))$ then full coverage is optimal, which occurs when $b_1$ is small enough.  However, if $\theta$ is large, then the rationality constraint in inequality (\ref{eq:buyer}) might not hold, so full coverage (even though optimal) might not be feasible.  In such cases, we can subtract a fixed amount $a > 0$ from the coverage to make it feasible by the buyer, thereby effectively lowering the benefit and the premium.   Finally, note that as $c_2$ decreases (that is, as the buyer becomes more risk averse), the ratio $(g_1(S_X(t)) - S_X(t))/(g_2(S_X(t)) - S_X(t))$ decreases for a given value of $t \ge 0$, which implies that the deductible decreases (that is, the optimal coverage increases).

\begin{corollary}
If $g_i(p) = \min (\alpha_i p, 1)$ for $1 < \alpha_1 < \alpha_2,$ then deductible insurance is optimal.
\end{corollary}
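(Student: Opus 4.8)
The plan is to reduce the corollary to Proposition \ref{prop:ded} by verifying that the ratio
\[
\phi(p) := \frac{g_1(p) - p}{g_2(p) - p}
\]
is non-decreasing on $(0,1)$. With $g_i(p) = \min(\alpha_i p, 1)$ and $1 < \alpha_1 < \alpha_2$, each $g_i$ has a single kink, at $p = 1/\alpha_i$, and $1/\alpha_2 < 1/\alpha_1$. Moreover $g_i(p) > p$ for every $p \in (0,1)$ (if $p < 1/\alpha_i$ then $\alpha_i p > p$; if $p \ge 1/\alpha_i$ then $g_i(p) = 1 > p$), so the denominator of $\phi$ is strictly positive and $\phi$ is well defined on $(0,1)$.

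First I would split $(0,1)$ into the three subintervals $(0, 1/\alpha_2]$, $[1/\alpha_2, 1/\alpha_1]$, and $[1/\alpha_1, 1)$ and compute $\phi$ on each. On the first, $g_1(p) - p = (\alpha_1 - 1)p$ and $g_2(p) - p = (\alpha_2 - 1)p$, so $\phi \equiv (\alpha_1-1)/(\alpha_2-1)$, a constant strictly less than $1$; on the third, $g_1(p) - p = g_2(p) - p = 1 - p$, so $\phi \equiv 1$; and on the middle interval $g_1(p) - p = (\alpha_1 - 1)p$ while $g_2(p) - p = 1 - p$, so
\[
\phi(p) = \frac{(\alpha_1 - 1)\, p}{1 - p},
\]
which is increasing there, being the product of the increasing positive function $p \mapsto (\alpha_1 - 1)p$ and the increasing function $p \mapsto 1/(1-p)$ on $(0,1)$.

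Then I would check continuity at the two kinks: at $p = 1/\alpha_2$ the middle expression gives $(\alpha_1-1)(1/\alpha_2)/(1 - 1/\alpha_2) = (\alpha_1-1)/(\alpha_2-1)$, matching the left piece, and at $p = 1/\alpha_1$ it gives $(\alpha_1-1)(1/\alpha_1)/(1 - 1/\alpha_1) = 1$, matching the right piece. Hence $\phi$ is continuous, constant on $(0, 1/\alpha_2]$, increasing on $[1/\alpha_2, 1/\alpha_1]$, and constant on $[1/\alpha_1, 1)$, so it is non-decreasing on all of $(0,1)$. Proposition \ref{prop:ded} then applies and yields $f^*(x) = (x-d)_+$ with $d = \inf\{t : \phi(S_X(t)) \le (\theta - b_1)/(\theta(1+b_1))\}$, together with the degenerate case $f^* \equiv 0$ when this set is empty, which occurs when the threshold $(\theta - b_1)/(\theta(1+b_1))$ falls below $\inf_{(0,1)} \phi = (\alpha_1-1)/(\alpha_2-1)$, e.g.\ for $b_1$ close to $\theta$. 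There is no substantive obstacle: the argument is just a piecewise evaluation of $\phi$, and the only points needing care are the strict positivity of the denominator on $(0,1)$ and the matching of the pieces at $p = 1/\alpha_1$ and $p = 1/\alpha_2$.
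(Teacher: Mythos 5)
Your proof is correct and follows exactly the route the paper intends: the paper omits the computation but states that the corollary "follows directly from showing that $(g_1(p)-p)/(g_2(p)-p)$ increases on $(0,1)$," and notes that for AVaR this ratio increases from $(\alpha_1-1)/(\alpha_2-1)$ to $1$, which is precisely what your piecewise evaluation establishes. The only caveat worth noting is that the ratio is merely non-decreasing (constant on the two outer intervals), but that weak monotonicity is all Proposition~\ref{prop:ded} actually needs for the deductible form, consistent with the paper's remark that non-strictly-concave distortions may yield non-unique optima.
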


\noi For the AVaR distortion, $(g_1(p) - p)/(g_2(p) - p)$ increases from $(\alpha_1 -
1)/(\alpha_2 - 1)$ to 1.  If $(\alpha_1 - 1)/(\alpha_2 - 1) > (\theta - b_1)/(\theta(1 +
b_1))$, then zero coverage is optimal.  If $b_1 > 0$ and if $S_X(0) = 1$, then full coverage is
never optimal.

\begin{corollary}
If $g_i(p) = 1 - (1 - p)^{d_i}$ for $1 < d_1 < d_2,$ then deductible insurance is optimal.
\end{corollary}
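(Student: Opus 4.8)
The plan is to apply Proposition~\ref{prop:ded}, so the entire task reduces to checking that $p \mapsto (g_1(p)-p)/(g_2(p)-p)$ is strictly increasing on $(0,1)$ when $g_i(p) = 1-(1-p)^{d_i}$ with $1 < d_1 < d_2$. First I would simplify the ratio. Since $g_i(p)-p = (1-p)-(1-p)^{d_i} = (1-p)\bigl(1-(1-p)^{d_i-1}\bigr)$, this quantity is strictly positive on $(0,1)$ (because $0 < 1-p < 1$ and $d_i > 1$ force $(1-p)^{d_i} < 1-p$), so the ratio and its logarithm are well defined; after cancelling the common factor $1-p$ and substituting $q = 1-p \in (0,1)$, $a = d_1-1$, $b = d_2-1$ with $0 < a < b$, the ratio becomes $\phi(q) = (1-q^a)/(1-q^b)$. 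Because $q = 1-p$ is decreasing in $p$, it suffices to show $\phi$ is strictly decreasing on $(0,1)$.

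For the monotonicity of $\phi$ I would differentiate and track the sign of the numerator. A direct computation gives $\phi'(q) = N(q)/(1-q^b)^2$ with $N(q) = -aq^{a-1}(1-q^b)+bq^{b-1}(1-q^a) = q^{a-1}h(q)$, where $h(q) = -a + bq^{b-a} + (a-b)q^b$. Here $h(1)=0$ and $h'(q) = b(b-a)q^{b-a-1}(1-q^a) > 0$ on $(0,1)$, so $h$ is strictly increasing there and hence $h(q) < h(1)=0$; therefore $N(q) < 0$ and $\phi'(q) < 0$ on $(0,1)$. (Equivalently, and perhaps more cleanly, one can write $1-q^a = a\int_q^1 t^{a-1}\,dt$, so that $\phi(q)$ is the $t^{b-1}\,dt$-weighted average of the strictly decreasing function $t^{a-b}$ over $[q,1]$, which is strictly below its left-endpoint value $q^{a-b}$; this immediately yields $(\log\phi)'<0$.) Translating back to $p$, the ratio $(g_1(p)-p)/(g_2(p)-p)$ is strictly increasing on $(0,1)$, so Proposition~\ref{prop:ded} applies and gives the deductible form $f^*(x) = (x-d)_+$ with the $d$ stated there.

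I do not expect any real obstacle: the whole argument is elementary once the factor $1-p$ is cancelled, and the only point needing a line of care is the positivity of $g_i(p)-p$ on $(0,1)$, which is what makes Proposition~\ref{prop:ded} applicable in the first place. For completeness I would append a short remark, parallel to the AVaR corollary above: since $\phi(q)\to(d_1-1)/(d_2-1)$ as $q\to1^-$ and $\phi(q)\to1$ as $q\to0^+$, the ratio $(g_1(p)-p)/(g_2(p)-p)$ runs from $(d_1-1)/(d_2-1)$ up to $1$, so zero coverage is optimal exactly when $(d_1-1)/(d_2-1)\ge(\theta-b_1)/(\theta(1+b_1))$, and (when $b_1>0$ and $S_X(0)=1$) full coverage is never optimal.
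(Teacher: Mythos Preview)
Your proof is correct and follows exactly the route the paper intends: the paper omits the details, stating only that the corollary follows from showing $(g_1(p)-p)/(g_2(p)-p)$ increases on $(0,1)$, and your substitution $q=1-p$ together with the derivative analysis of $\phi(q)=(1-q^a)/(1-q^b)$ carries this out cleanly. Your appended remark on the limits is also correct---in fact the ratio tends to $1$ (not $\infty$) as $p\to1^-$, so your observation actually sharpens the paper's subsequent commentary.
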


\noi The dual power distortion is so named because it is the dual to the proportional hazards transform.  For this distortion, $(g_1(p) - p)/(g_2(p) - p)$ increases from $(d_1 - 1)/(d_2 - 1)$ to $\infty$.  Thus, if $(d_1 - 1)/(d_2 - 1) > (\theta - b_1)/(\theta(1 + b_1))$, then zero coverage is optimal.  If $S_X(0) = 1$, then full coverage is never optimal.

We end this section with two examples in which we show that deductible coverage as defined in
the narrow sense of equation (\ref{eq:dedcov}) is not necessarily optimal.

\begin{example}\label{ex:1}
Define the distortions $g_1$ and $g_2$ on $[0, 1]$ by
\begin{equation}
g_1(p) =
\begin{cases}
\frac{9}{ 8} p, &0 \le p \le \frac{1}{2}, \\
\frac{7}{ 8} p + \frac{1}{8}, &\frac{1 }{2} < p \le 1;
\end{cases}
\end{equation}

\noi and
\begin{equation}
g_2(p) =
\begin{cases}
\frac{4}{3} p, &0 \le p \le \frac{1}{4}, \\
p + \frac{1}{12}, & \frac{1}{4} < p \le \frac{3}{4}, \\
\frac{2}{3} p + \frac{1}{3}, &\frac{3}{4} < p \le 1.
\end{cases}
\end{equation}

\noi  If $X \sim Exp(1)$, $\theta = 1$, and $b_1 = 1/3$, then one can show that optimal insurance $f^*_1$ satisfies
\begin{equation}
(f^*_1)'(t) =
\begin{cases}
1, &0 \le t < \ln \frac{3}{2}, \\
0, & \ln \frac{3}{2} \le t < \ln 3, \\
1, & t \ge \ln 3.
\end{cases}
\end{equation}

\noi In other words, optimal insurance $f^*_1$ exhibits full coverage up to $\ln(3/2)$ followed by no additional coverage until $\ln 3$, after which the coverage is full at the margin. Specifically, $f^*_1$ is given by
\begin{equation}
(f^*_1)(t) =
\begin{cases}
t, &0 \le t < \ln {3 \over 2}, \\
\ln {3 \over 2}, & \ln {3 \over 2} \le t < \ln 3, \\
t + \ln {1 \over 2}, & t \ge \ln 3.
\end{cases}
\end{equation}
\end{example}

\begin{example}
Define the distortions $g_1$ and $g_2$ on $[0, 1]$ by
\begin{equation}
g_1(p) =
\begin{cases}
{4 \over 3} p, &0 \le p \le {1 \over 4}, \\
p + {1 \over 12}, & {1 \over 4} < p \le {3 \over 4}, \\
{2 \over 3} p + {1 \over 3}, &{3 \over 4} < p \le 1.
\end{cases}
\end{equation}

\noi and
\begin{equation}
g_2(p) =
\begin{cases}
{3 \over 2} p, &0 \le p \le {1 \over 2}, \\
{1 \over 2} p + {1 \over 2}, &{1 \over 2} < p \le 1;
\end{cases}
\end{equation}

\noi If $X \sim Exp(1)$, $\theta = 1$, and $b_1 = 1/3$, then one can show that the optimal insurance $f^*$ paid by the insurer is given by $f^*(t) = t - f^*_1(t)$ for $t \ge 0$, in which $f^*_1$ is the optimal insurance in Example \ref{ex:1}.  In other words, optimal insurance in this case exhibits a deductible of $\ln(3/2)$ with a maximum limit, or maximum payout, of $\ln 2$.
\end{example}


\subsection{Examples with Constraints}\label{sect:ex-con}

Regulators of insurance often put constraints on insurance contracts that insurers are allowed
to provide in the market.  To illustrate the effect of constraints on the form of the indemnity
contract $f$, we include two simple examples.  In both these examples, we follow the model for
two agents with $b_1 > 0$, $b_2 = 0$, and $c_1 = c_2 = -(1+\theta)$. Let
\begin{equation*}
\left\{ \begin{aligned} g_1(p) & = \min( \a_1 p, 1), \\
g_2(p) & = \min( \a_2 p, 1), \\ h_1(p) & = \min( \beta p, 1). \end{aligned} \right.
\end{equation*}
Agent 1 is the insurer with the AVaR distortion function $g_1$ that faces a regulator
constraint based on the $H_{h_1}$ risk measure; agent 2 is the buyer with the AVaR distortion
function $g_2$.

\begin{example}\label{ex:3} In this example, suppose that $\a_2 > \beta > \a_1 > 1$; that is, the
buyer is the most risk averse with the insurer being the least risk averse and the regulator
somewhere in between. The relevant terms in the sum \eqref{eq:constrained-min} are given by
\begin{equation}\label{eq:distortion-ex1} \left\{ \begin{aligned}
Q_1(p) & = [(1+b_1)\min(\a_1 p,1) - (1+\th)p + \lambda \min(\beta p, 1) ]/| b_1 +\la -\th| \\
Q_2(p) &= [\min(\a_2 p, 1) - (1+\th)p]/\th.
\end{aligned}\right.
\end{equation}
\noi By Theorem \ref{thm:constraint-Po}, for a given Lagrange multiplier $\la \ge 0$, the
optimal contract satisfies $(f^\la)'( S_X(t)) = 1$ if $Q_1(p)<Q_2(p)$ and $(f^\la)'(S_X(t))=0$
otherwise.

In the following, we assume that $\th > \la + b_1$, so the transaction costs are \emph{large}.
The risk functions $Q_1$ and $Q_2$ are illustrated in Figure \ref{fig:constrained}. We note
that for
large $p \sim 1$, $Q_1(p) \ge Q_2(p)$ and moreover, the two piecewise linear functions cross at
most once on $(0,1)$. More precisely, if $\a_2 > (1+\th) + \frac{[(1+b_1)\a_1 - (1+\th) + \la
\beta]\th}{\th-b_1-\la}$, then for small $p \sim 0$, $Q_1(p) < Q_2(p)$, and $Q_1$ and $Q_2$
have exactly one crossing point $0 < p^* < 1$.  Thus, the optimal contract in that case is
deductible insurance $f^\la(x) = (x-d)_+$, as the insurer covers large risks (small $p$) and
the buyer takes on small risks. If $\a_2$ is smaller than the above threshold, then $Q_1(p) >
Q_2(p)$ for all $p \in (0,1)$, and it is optimal to have zero insurance $f^\la \equiv 0$ (note
that zero insurance implies $\la =0$ as the constraint is necessarily non-binding).

The two (finite) possibilities for the deductible level $d$ (with $S_X(d)$ corresponding to the
unique crossing point of $Q_1$ and $Q_2$) are illustrated in Figure \ref{fig:constrained}. The
left panel of Figure \ref{fig:constrained} shows Case (a), whereby
\begin{equation}\label{eq:p-star2}
 S_X(d)= p^*_2 = \frac{\la(1+\th) - \th + b_1}{(1+\th)(b_1+\la) - (1+b_1)\a_1 \th}.
\end{equation}
The necessary and sufficient condition for Case (a) to occur is $1/\b < p^*_2 < 1/\a_1$, which
is equivalent to
$$
-b_1 < \lambda < \min\left( \th - b_1, \frac{(\th-b_1)\beta +
(1+\th)b_1-(1+b_1)\a_1\th}{(1+\th)(\beta-1)} \right).
$$
It is possible that the upper bound is negative which implies that case (a) cannot occur as
$\la$ is non-negative by construction.

Otherwise, we are in Case (b) shown on the right panel of Figure \ref{fig:constrained}, where
\begin{equation}
S_X(d) = p^*_1 = \frac{\th - (b_1 + \la)}{\th(1+b_1)\a_1 - b_1(1+\th) + \la[\th\beta - (1+\th)]}.
\end{equation}
Case (b) requires that $1/\a_2 < p^*_1 < 1/\b$, or
$$
\frac{b_1(1+\th)-\th(1+b_1)\a_1+(\th-b_1)\beta}{(\beta-1)(1+\th)} < \la < \frac{b_1(1+\th)-\th(1+b_1)\a_1+(\th-b_1)\a_2}{(\a_2 - 1) + \th (\beta - 1)}.
$$

\end{example}
\begin{example}\label{ex:4}
We keep the above notation but now suppose that $\beta > \a_2 > \a_1 >1$; that is, the
regulator is the most risk averse, the buyer is moderately risk averse, and again, the insurer
is the least risk averse.

Let $\lambda \ge 0$ be a Lagrange multiplier for this problem. We continue to assume $\th > b_1
+ \la$. The risk functions to compare are the same as in \eqref{eq:distortion-ex1} but their
relation has changed, as illustrated in the bottom panel of Figure \ref{fig:constrained}. In
particular, it is now possible that $Q_1$ and $Q_2$ cross twice in the interior of $(0,1)$, so
that the optimal contract may be a capped deductible.  Specifically, in the latter case
$$
f^\la(x) = (x-d_1)_+ \wedge d_2, \qquad\text{ where   } d_2 = S_X^{-1}\left(p^*_2\right)
$$
from \eqref{eq:p-star2} and
$$
d_1 = S_X^{-1}\left( 
\frac{\la\th}{(1+\th)(b_1+\la)-(1+b_1)\a_1\th+\a_2(\th-b_1-\la)} \right),
$$
subject to the feasibility constraints $1/\beta < S_X(d_1) < 1/\a_2$ and $1/\a_2 < S_X(d_2) <
1/\a_1$. Translating these into constraints for $\lambda$ we find that
$$
\max\left(-b_1, \frac{(1+\th)b_1}{\beta\th + \a_2 -(1+\th)} \right) < \la < \min\left( \th
- b_1,  \frac{(1+\th)b_1}{\a_2(1+\th)-(1+\th)} \right).
$$
This situation is illustrated in the bottom panel of Figure
\ref{fig:constrained}. Note that because $\a_2 > \a_1$, if there were no constraints, then the
optimal insurance would be deductible insurance.

Note that with such a contract, the regulator's risk level takes the form $H_{h_1}(f^\la(X)) =
S_X(d_2) - S_X(d_1)$ (since $1/\beta < S_X(d_1)<S_X(d_2)$). For instance, taking $\a_1 = 1.1,
\a_2=1.5, \beta = 2$, $\th=1.2, b_1=0.3$, and $\la = 0.18$, we obtain $S_X(d_1) = 0.5143$ and
$S_X(d_2) = 0.7636$, so that the insurer only covers the $23-48$th percentiles of the risk.
Since the constraint is binding, $B = H_{h_1}( f^\la(X)) = 0.249$, and looking back we can
interpret this as saying that the insurer is allowed to cover at most 24.9\% of the risk.
Observe that even though the regulator is having a lot of impact on the optimal contract (the
constraint is binding), the risk aversion of the insurer himself $\a_1$ still plays a role in
the shape of the insurance contract.
\end{example}

\begin{figure}[ht]
\begin{tabular*}{\textwidth}{lr}
\begin{minipage}{2.5in}\resizebox{!}{5cm}{
\input{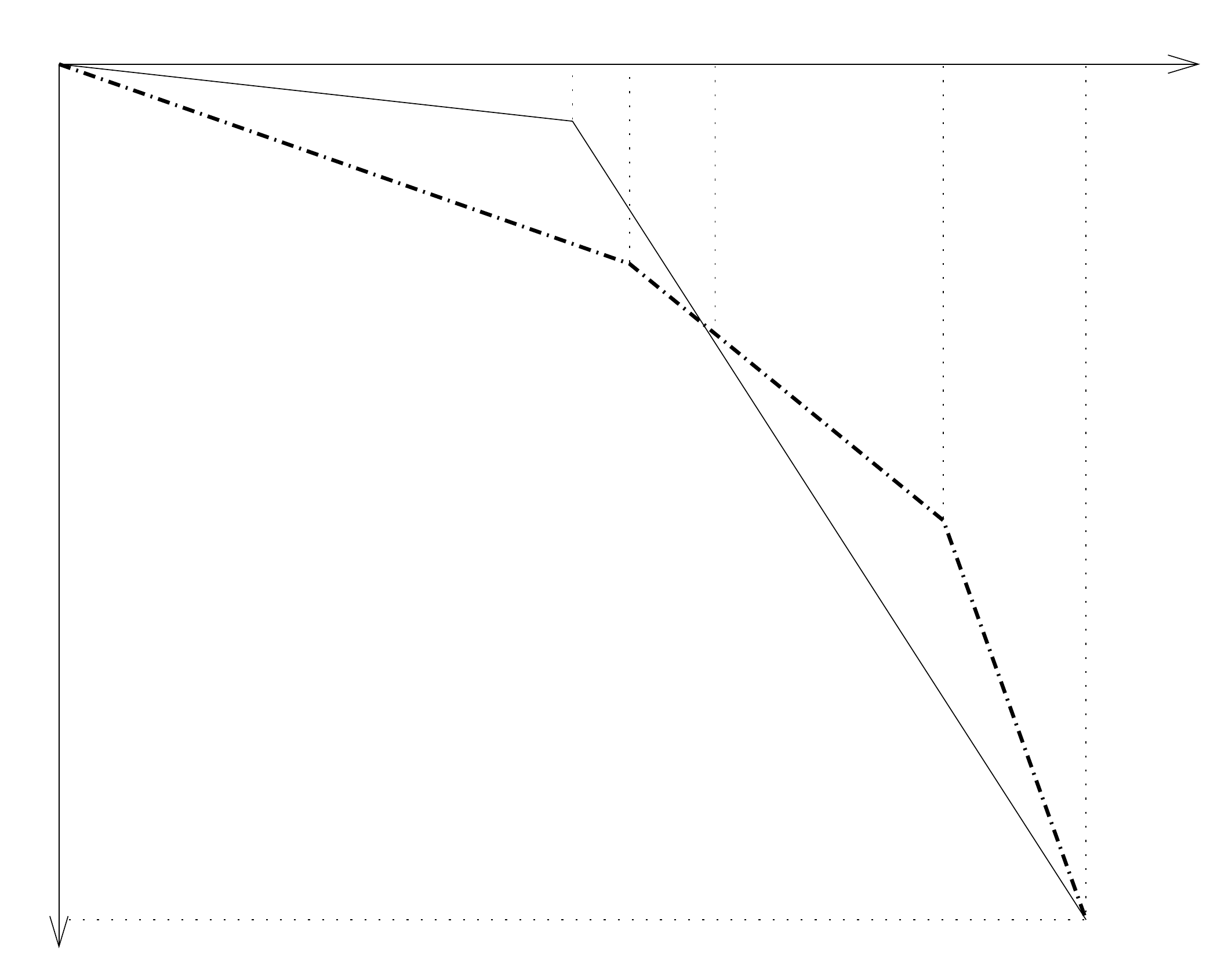tex_t}}
\end{minipage} &
\begin{minipage}{2.5in}\resizebox{!}{5cm}{
\input{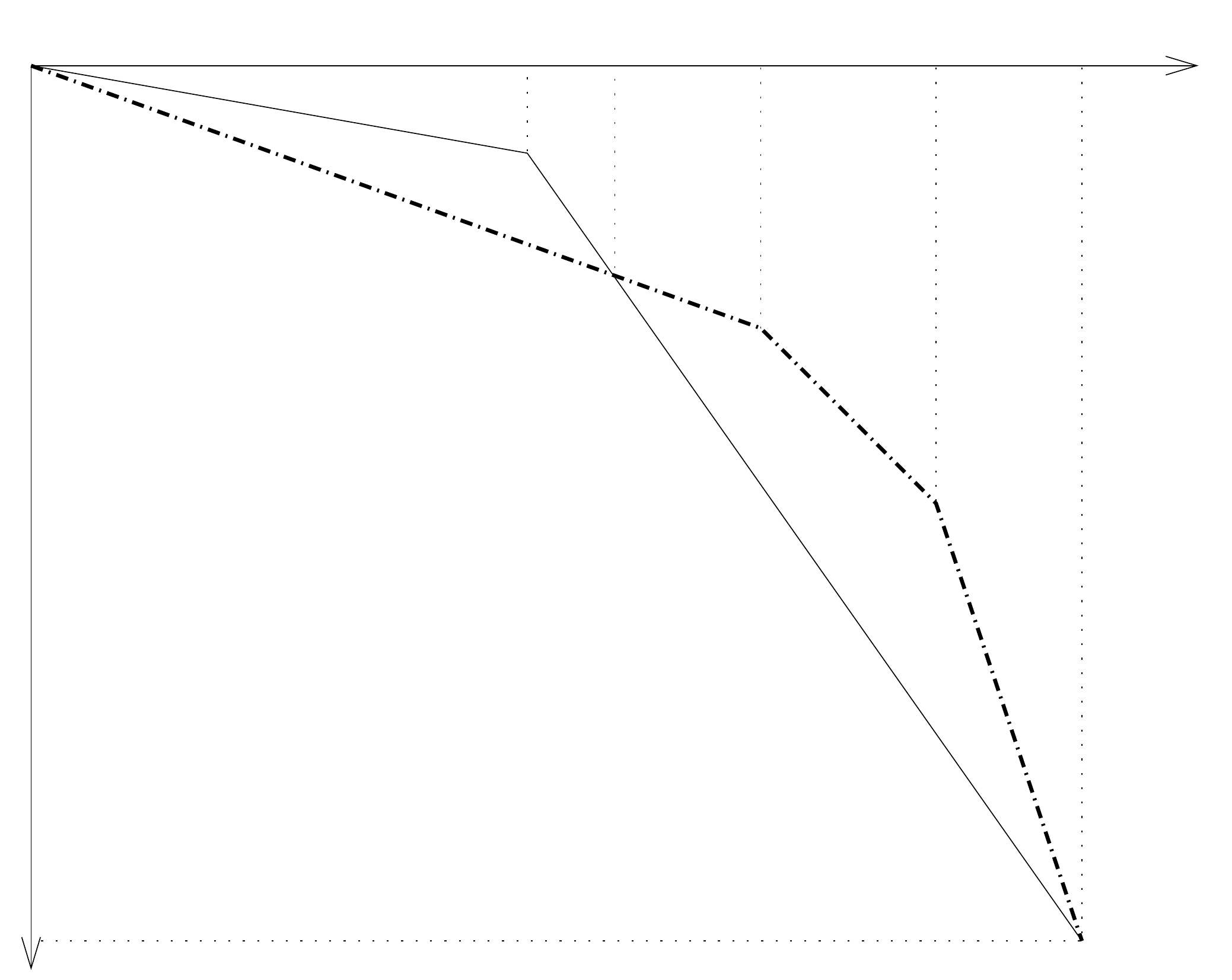tex_t}}
\end{minipage} \\
\begin{minipage}{2.5in}\resizebox{!}{5cm}{
\input{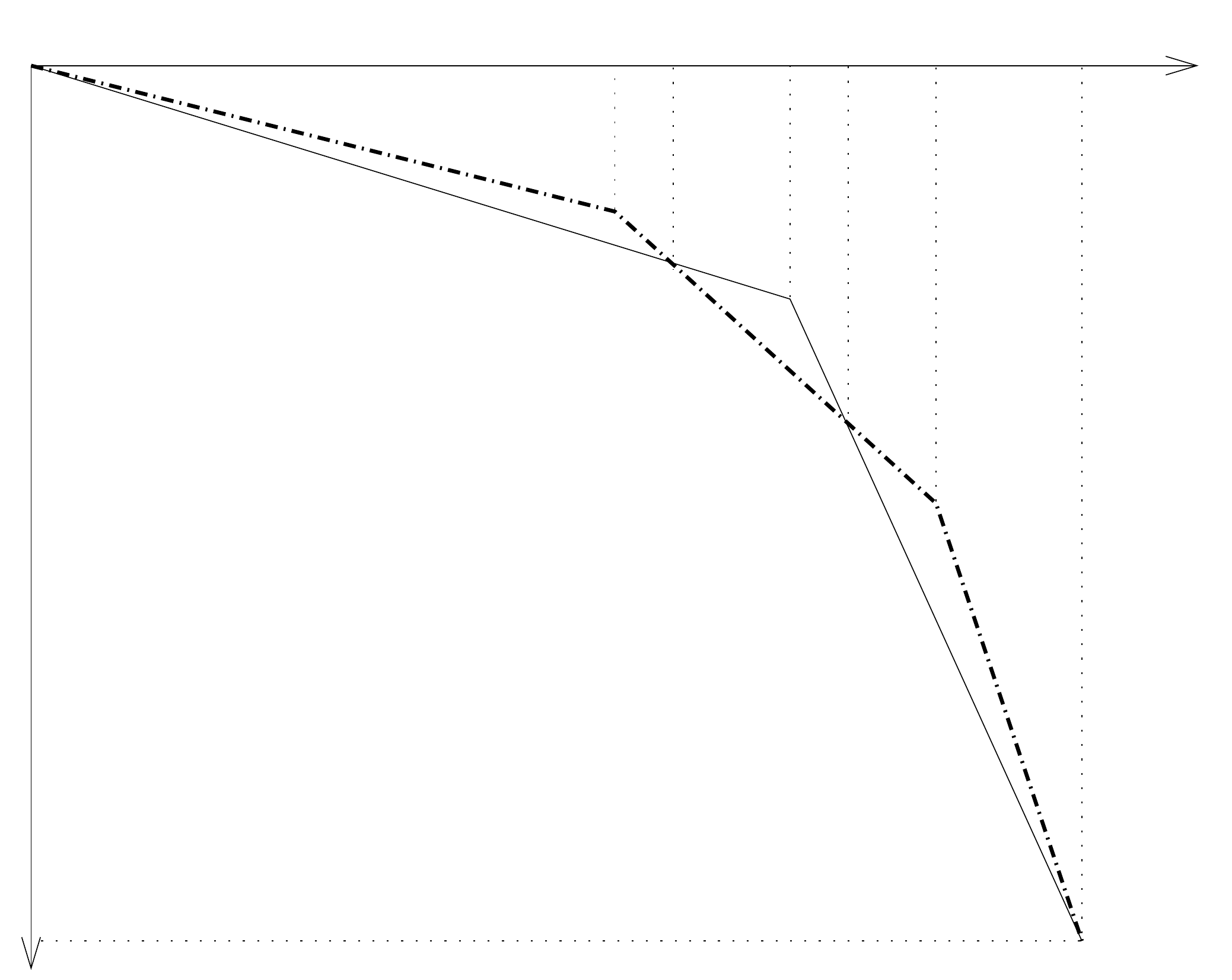tex_t}}
\end{minipage} &
\end{tabular*}
\caption{Risk functions of Examples \ref{ex:3} and \ref{ex:4}. The dashed line represents
$Q_1(p)  = [(1+b_1)\min(\a_1 p,1) - (1+\th)p + \lambda \min(\beta p, 1) ]/| b_1 +\la -\th|$, and
the solid line is $Q_2(p) = [\min(\a_2 p, 1) - (1+\th)p]/\th$. In this example, $\theta
> \lambda+b_1$, so we have $Q_1(1)=Q_2(1)=-1$. Note that both functions are piecewise
linear. The crossing points correspond to the tranche levels of optimal contracts. The top two
panels are for Example \ref{ex:3} (Case (a) on the left, Case (b) on the right), and the bottom
panel is for Example \ref{ex:4}. \label{fig:constrained}}
\end{figure}

\section{Minimizing the Risk of the Buyer subject to a Constraint}\label{sect:buyer}

To further explore the implications of constrained risk sharing, we consider a slightly
different example in which the buyer is the only minimizing agent. This is the usual insurance
setting whereby the insurer offers a menu of contracts and the buyer selects the one most
suited to her needs. Thus, the optimization is from the buyer's point of view; the insurer's
risk preferences enter the problem through the insurance price.

Assume that the buyer's risk-adjusted loss after obtaining insurance is $(1+b) H_g(X-f(X)) + (1 + \theta) \E f(X)$,  in which the first term
represents the residual risk and the second term represents the insurance premium. The insurer
himself is constrained by regulators to $H_h(f(X)) \le B$, so that only a limited amount of
risk may be transferred.  We ignore the desires of the insurer and focus on minimizing the
risk-adjusted loss of the buyer subject to this constraint.  Then, we seek to find a
non-decreasing $f^*$ that minimizes
\begin{equation}\label{eq:buyer-con}
(1+b)H_g(X-f(X)) + (1+\theta) \E f(X),
\end{equation}
subject to the regulatory constraint
\begin{equation}\label{eq:constr}
H_h(f(X)) \le B,
\end{equation}
for some $B > 0$. The following proposition is a direct counterpart of Theorem
\ref{thm:constraint-Po}.

\begin{theorem}\label{thm:single-agent-constraint}
An insurance contract $f^*$ that minimizes \eqref{eq:buyer-con} subject to \eqref{eq:constr}
is determined by
\begin{equation}\label{eq:constraint-tranche}
 (f^*)'(t)  = \left\{ \begin{aligned}
 1, && \text{ if } & (1+b) g( S_X(t)) > (1+\theta) S_X(t) + \lambda h ( S_X(t)), \\
0, && \text{ if } & (1+b) g( S_X(t)) \le (1+\theta) S_X(t) + \lambda h ( S_X(t)).
\end{aligned} \right.
\end{equation}
Furthermore, either $\la = 0$ or $\la > 0$, with the latter implying that \eqref{eq:constr}
holds with equality, from which we can determine $\la$.
\end{theorem}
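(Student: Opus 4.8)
The plan is to mimic the two-step structure of Theorems~\ref{thm:Po} and~\ref{thm:constraint-Po}: first reduce the search to comonotone contracts, then solve a pointwise Lagrangian relaxation and pin down the multiplier by complementary slackness.

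First I would argue that, without loss of generality, $f$ is comonotone with $X$, i.e.\ continuous and non-decreasing with $0\le f'\le 1$, and $f(0)=0$. Indeed, given any $f$ with $H_h(f(X))\le B$, the comonotone improvement of Ludkovski and R\"uschendorf~\cite[Proposition~1]{LR07}, applied exactly as in Lemma~\ref{lem:constraint-co}, produces a comonotone allocation $(\tilde f(X), X-\tilde f(X))$ with $\tilde f(X)\le_{cx} f(X)$ and $X-\tilde f(X)\le_{cx} X-f(X)$; since $H_g$ and $H_h$ are consistent with $\le_{cx}$ and $\E\tilde f(X)=\E f(X)$, the objective~\eqref{eq:buyer-con} does not increase and the constraint~\eqref{eq:constr} is preserved. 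The normalization $f(0)=0$ is justified as in the proof of Theorem~\ref{thm:Po}. Then, writing $X-f(X)=\phi(X)$ with $\phi$ non-decreasing and $\phi(0)=0$, and using the integration-by-parts identity~\eqref{eq:int-by-parts} together with $H_g(X)=\int_0^\infty g(S_X(t))\,dt$ from~\eqref{eq:2.1}, the objective becomes $(1+b)H_g(X)-\int_0^\infty[(1+b)g(S_X(t))-(1+\theta)S_X(t)]\,df(t)$ and the constraint becomes $\int_0^\infty h(S_X(t))\,df(t)\le B$. So the problem is equivalent to maximizing $\int_0^\infty[(1+b)g(S_X(t))-(1+\theta)S_X(t)]\,df(t)$ over such $f$ subject to $\int_0^\infty h(S_X(t))\,df(t)\le B$.

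Next I would introduce, for $\lambda\ge 0$, the Lagrangian $\mathcal L_\lambda(f)=\int_0^\infty[(1+b)g(S_X(t))-(1+\theta)S_X(t)-\lambda h(S_X(t))]\,df(t)+\lambda B$. Since $df(t)=f'(t)\,dt$ with $0\le f'\le 1$, this is maximized pointwise in $t$ by taking $f'(t)=1$ exactly where the bracketed coefficient is positive and $f'(t)=0$ where it is $\le 0$; this is precisely~\eqref{eq:constraint-tranche}, and on the (possibly non-null) zero set of the coefficient $f'$ may be chosen freely in $[0,1]$. Call such a maximizer $f^*_\lambda$. It then remains to choose $\lambda^*$: if $f^*_0$ is already feasible, set $\lambda^*=0$; otherwise note that the coverage set $\{t:(1+b)g(S_X(t))>(1+\theta)S_X(t)+\lambda h(S_X(t))\}$ shrinks as $\lambda$ increases (because $h\ge 0$), so $\lambda\mapsto H_h(f^*_\lambda(X))$ is non-increasing, equals $H_h(f^*_0(X))>B$ at $\lambda=0$, and decreases to $0$ as $\lambda\to\infty$. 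Using the continuity property~(e) of $H_h$ and the freedom in the choice of $f'$ on the zero set of the coefficient, I can then select $\lambda^*>0$ and a contract $f^*$ of the form~\eqref{eq:constraint-tranche} with $H_h(f^*(X))=B$.

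Finally, a weak-duality argument verifies optimality: for any feasible $f$, complementary slackness gives $\int_0^\infty[(1+b)g(S_X(t))-(1+\theta)S_X(t)]\,df^*(t)=\mathcal L_{\lambda^*}(f^*)$, pointwise optimality of $f^*$ gives $\mathcal L_{\lambda^*}(f^*)\ge\mathcal L_{\lambda^*}(f)$, and $H_h(f(X))\le B$ gives $\mathcal L_{\lambda^*}(f)\ge\int_0^\infty[(1+b)g(S_X(t))-(1+\theta)S_X(t)]\,df(t)$; chaining these shows $f^*$ maximizes the rewritten objective, hence minimizes~\eqref{eq:buyer-con}. I expect the main obstacle to be the Lagrange-multiplier step, namely establishing existence of $\lambda^*$ together with complementary slackness in the presence of a possible jump in $\lambda\mapsto H_h(f^*_\lambda(X))$ — which is exactly why the freedom to set $f'\in[0,1]$ on the zero set of the coefficient is needed. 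Everything else is a routine adaptation of the proofs of Theorems~\ref{thm:Po} and~\ref{thm:constraint-Po}, with the concavity of $h$ (equivalently, convexity of $H_h$) being essential for the comonotone reduction in the first step.
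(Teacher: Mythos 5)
Your proposal is correct and follows essentially the same route as the paper: rewrite the Lagrangian via the integration-by-parts identity \eqref{eq:int-by-parts} and optimize the integrand pointwise in $t$, then fix $\lambda$ by complementary slackness. The paper's proof is just a terser version of this; your additional steps (the comonotone reduction, which is already built into the problem statement since $f$ is assumed non-decreasing, the monotonicity-in-$\lambda$ argument for existence of the multiplier, and the weak-duality verification) merely make explicit what the paper leaves implicit.
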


\begin{proof}
Fix $\la \ge 0$. Proceeding as in \eqref{eq:int-by-parts}, we have
\begin{align*}
& (1+b) H_g( X- f^{\la}(X) ) + (1+\theta) \E Y + \lambda( H_h(f^{\la}(X)) - B)  \\
& \qquad = \int_0^\infty [ -(1+b) g + (1+\theta) + \lambda h]( S_X(t) ) \, df^{\la}(t) + Const.
\end{align*}
Thus, to minimize \eqref{eq:buyer-con} we should set $(f^{\la})'(t) = 0$ when the integrand is
positive, and $f'(t)= 1$ when the integrand is negative, which is equivalent to
\eqref{eq:constraint-tranche}.  To find $\la$, we solve for $\lambda \int_0^\infty h(S_X(t)) \,
df^{\lambda}(t) = B$.\qed
\end{proof}

To be concrete, take $g(p) = \min( \a \, p, 1)$ and $h(p) = \min( \b \, p, 1)$, in which
$\a>\b>1$ so that the buyer is more risk averse than the regulator.  Also, suppose the loss $X$
is exponentially distributed with mean equal to $1/\mu$. Then, for a given Lagrange multiplier
$\la \ge 0$, we find $f^\la$ to minimize
\begin{align}\label{eq:ex1}
&\int_0^{{1 \over \mu} \ln \b} [-(1 + b) e^{\mu t} + \la  e^{\mu t} + (1 + \theta) ] \, e^{-\mu t} \, df(t) \nonumber \\
&+ \int_{{1 \over \mu} \ln \b}^{{1 \over \mu} \ln \a} [-(1 + b) e^{\mu t} + \la \b + (1 + \theta)] \, e^{-\mu t} \, df(t)  \\
&+ \int_{{1 \over \mu} \ln \a}^\infty [-(1 + b) \a + \la \b + (1 + \theta)] e^{-\mu t} \, df(t). \nonumber
\end{align}
From \eqref{eq:ex1}, we consider the following cases:

\medskip

\noi {\bf Case 1:}  If $-(1 + b) + \la + (1 + \theta) = \la + \theta - b \le 0$, then all the integrands in \eqref{eq:ex1} are negative, which implies that $f^\la (x) = x$.   If $B \ge  (1+ \ln \b)/\mu = H_h(X)$, the constraint is not binding, and full insurance $f^\la$ is optimal.  Else, if $B < H_h(X)$, then the constraint binds. which implies that full insurance cannot be optimal.

\medskip

\noi {\bf Case 2:}  If $ \la + \theta - b > 0$  and $-(1 + b) \b + \la \b + (1 + \theta) \le 0$, that is,
\begin{align}\label{eq:la-case2}
b-\th < \la  \le (1+b) - (1+\th)/\b,
\end{align}
then $f^\la(x) = (x - d)_+$ for some deductible $d \in [0, (\ln \b)/\mu]$. Specifically, $d = (1/\mu) \ln\left({1 + \theta \over 1 + b - \la} \right)$.  In this case, we have $H_h((X- d)_+) = (1 + \ln \b)/\mu - d$. We have two subcases to consider.

\smallskip

\hangindent 30 pt {\bf a:}  If $\mu B \ge 1+ \ln \left( \frac{\b(1+b)}{1+\th} \right)$, then the constraint does not bind (that is, $\la = 0$), which implies that $d=(1/\mu) \ln \left( \frac{1+\th}{1+b} \right)>1/\mu$.  For this to happen, we also need to satisfy \eqref{eq:la-case2} which reduces to $0+\theta - b > 0$ and $-(1+b)\b + (1+\th) \le 0$, or equivalently, $b < \th \le (1+b)\b -1$.

\smallskip

\hangindent 30 pt {\bf b:}  Else,  if $\mu B < 1+ \ln \left( \frac{\b(1+b)}{1+\th} \right)$, then the constraint binds, and we have $\la > 0$.  Specifically $\la = (1+b) - \frac{1+\th}{\b} \, e^{\mu B -1}$. To satisfy \eqref{eq:la-case2}, we need $\mu B \ge 1$ and $\mu B < 1 + \ln \b$.   Recall that $\mu B < 1+ \ln \left( \frac{\b(1+b)}{1+\th} \right)$ in this case; by comparing the latter two upper bounds on $\mu B$, we find that Case 2b occurs if (2b1) $\th \le b$ and $1 \le \mu B < 1+\ln \b$; or if (2b2) $\th > b$ and $1 \le \mu B < 1+ \ln \left( \frac{\b(1+b)}{1+\th} \right)$.
Finally, $d=-B+(1 + \ln \b)/\mu>0$ in either situation.

\medskip

\noi {\bf Case 3:} If $ \la + \theta - b > 0$, $-(1 + b) \b + \la \b + (1 + \theta) > 0$, and $-(1 + b) \a + \la \b + (1 + \theta) < 0$, that is,
\begin{align}\label{eq:la-case3}
b-\th < \la  \hbox{ and } (1+b)\beta - (1+\theta) < \la \beta  < (1+b)\alpha - (1+\theta),
\end{align}
then $f^\la(x) = (x - d)_+$ for some deductible $d \in [ (\ln \b)/\mu, (\ln \a)/\mu]$. Specifically, $d =  (1/\mu) \ln \left( {\la \b + (1 + \theta) \over 1 + b} \right)$.  In this case, we have $H_h((X- d)_+) = \b e^{-\mu d}/\mu = {\b (1 + b) \over \mu (\la \b + (1 + \theta)) }$.  We have two subcases to consider.

\smallskip

\hangindent 30 pt {\bf a:}  If $\mu B \ge  \frac{\b(1+b)}{1+\th}$, then the constraint does not bind, and we have $\la = 0$ and $d = (1/\mu) \ln \left( \frac{1 + \theta}{1 + b} \right)$.  To satisfy \eqref{eq:la-case3}, we require $b < \th$ and $\b(1+b) < (1+\th) < \a(1+b)$.  Summarizing, Case 3a occurs if  $b < \th$, $\b(1+b)-1 < \th < \a(1+b)-1$,
and $\mu B \ge \frac{\b(1+b)}{1+\th}$.

\smallskip

\hangindent 30 pt {\bf b:}  If $\mu B <  \frac{\b(1+b)}{1+\th}$, then the constraint binds, and we have $\la = {1+b \over \mu B} - {1 + \theta \over \b}$ and $d = (1/\mu) \ln \left(\frac{\b}{\mu B} \right)$.  To satisfy  \eqref{eq:la-case3},  we require $\b/\a < \mu B < 1$ and $\mu B \left( (b-\th)\b + (1+\th) \right) < (1+b)\b$.  By considering possible values of $\theta$ and comparing with the above bounds, we find that Case 3b occurs when
$$
\left\{
\begin{aligned}
\th \le (1+b)\b - 1, && \text{ and } && \b/\a < \mu B < 1; \; \; \; \; \; \hbox{ or } \\
(1+b)\b-1 < \th < (1+b)\a-1, && \text{ and } && \b/\a < \mu B < \frac{(1+b)\b}{1+\th}.
\end{aligned}
\right.
$$

\medskip

\noi {\bf Case 4:} If $-(1 + b) \a + \la \b + (1 + \theta) = 0$, then $\la = ((1 + b) \a - (1 +
\th))/\b$, from which it follows that $ \la + \theta - b > 0$ and $-(1 + b) \b + \la \b + (1 +
\theta) > 0$.  Thus, the first two integrals in \eqref{eq:ex1} are positive, which implies that
$(f^\la)'(t) = 0$ for $t \le (\ln \a)/\mu$.  Moreover, the third integral is identically zero,
so we have infinitely many possible solutions $f^\la$. This degeneracy arises due to the
piecewise linear form of the AVaR distortions we selected.  Within this framework, we have two
subcases to consider.

\smallskip

\hangindent 30 pt {\bf a:}  If $\th = (1 + b) \a - 1$, then $\la = 0$, and the constraint does
not bind necessarily.  Thus, $f^\la$ is given by $(f^\la)'(t) = 0$ for $t \le (\ln \a)/\mu$ and
arbitrary $(f^\la)'(t) \in [0, 1]$ for $t > (\ln \a)/\mu$ such that $H_h(f^\la(X)) \le B$.

\smallskip

\hangindent 30 pt {\bf b:} If  $\th < (1 + b) \a - 1$, then $\la > 0$, and the constraint
binds.  Thus, $f^\la$ is given by $(f^\la)'(t) = 0$ for $t < (\ln \a)/\mu$ and arbitrary
$(f^\la)'(t) \in [0, 1]$ for $t \ge (\ln \a)/\mu$ such that $H_h(f^\la(X)) = B$.  We give some
examples to illustrate possible indemnity functions $f^\la$:

\smallskip

\qquad \hangindent 50 pt {\bf i:}  Let $f^\la(x) = (x - d)_+$ with deductible $d = (1/\mu) \ln \left( {\b \over \mu B} \right)$.  Note that $d \ge (\ln \a)/\mu$ if and only if $\mu B \le \b/\a$.

\smallskip

\qquad \hangindent 50 pt {\bf ii:}  Let $f^\la(x) = r(x - (\ln \a)/\mu)_+$ with proportional coverage $r = \mu B \a/\b$.  Note that $r \in [0, 1]$ if and only if $\mu B \le \b/\a$.

\smallskip

\qquad \hangindent 50 pt {\bf iii:}  Let $f^\la(x) = \min \left( r'(x - d')_+, \; m -d' \right)$ with $d'$ and $r'$ given such that $d ' \ge (\ln \a)/\mu$ and $\mu B e^{\mu d'}/\b < r' \le 1$, from which it follows that $m = (1/\mu) \ln \left({ \b r' e^{\mu d'} \over \b r' - \mu B e^{\mu d'}} \right) > d'$.

\medskip

\noi {\bf Case 5:} If $ \la + \theta - b > 0$ and $-(1 + b) \a + \la \b + (1 + \theta) > 0$, then $f^\la \equiv 0$ because all three integrals in \eqref{eq:ex1} are positive.  In this case, the constraint does not bind, and we necessarily have
$\la = 0$.  Thus, if $\theta >  (1 + b) \a - 1$, then $f^* = f^\la \equiv 0$ is optimal; that is, any amount of insurance is too expensive relative to the benefit that the buyer obtains from it.

\medskip

See Table 1 for a summary of these results as a function of the risk loading parameter $\theta$ and regulator's constraint $B$.

\begin{table}\label{table:ex}
$$
\begin{array}{|cccc|} \hline
\multicolumn{4}{|c|}{ {\th > (1+b)\a -1}} \\
\hline B > 0 & \text{Case 5 } & d = +\infty & \la = 0 \\ \hline \hline
\multicolumn{4}{|c|}{ {\th = (1+b)\a -1}} \\
\hline B > 0 & \text{Case 4a } & \text{non-unique optimum} & \la = 0 \\ \hline \hline
\multicolumn{4}{|c|}{ {(1+b)\b-1 \le \th < (1+b)\a -1}} \\
\hline \mu B \le \b/\a & \text{Case 4b } & \text{non-unique optimum} & \la = ((1 + b) \a - (1 + \th))/\b \\
\b/\a < \mu B < \frac{(1+b)\b}{1+\th} & \text{Case 3b} & d=(1/\mu) \ln \left(\frac{\b}{\mu B} \right) &
\la=\frac{1+b}{\mu B} - \frac{1+\th}{\b} > 0 \\
\mu B \ge \frac{(1+b)\b}{1+\th} & \text{Case 3a} & d= (1/\mu) \ln \left(\frac{1 + \theta}{1 + b} \right) & \la = 0 \\
\hline \hline
\multicolumn{4}{|c|}{ {b < \th < (1+b)\b -1}} \\
\hline \mu B \le \b/\a & \text{Case 4b } & \text{non-unique optimum} & \la = ((1 + b) \a - (1 + \th))/\b \\
\b/\a < \mu B < 1 & \text{Case 3b} & d=(1/\mu) \ln \left(\frac{\b}{\mu B} \right) & \la=\frac{1+b}{\mu B}
- \frac{1+\th}{\b} > 0 \\
1 \le \mu B < 1+ \ln ( \frac{\b(1+b)}{1+\th} ) & \text{ Case 2b2 } & d=-B+\frac{1+\ln \b}{\mu}&
\la = (1+b)-\frac{1+\th}{\b} \, e^{\mu B-1} \\
\mu B \ge 1+ \ln \left( \frac{\b(1+b)}{1+\th} \right) & \text{ Case 2a } & d= 1/\mu \ln \left(\frac{1+\th}{1+b} \right)
& \la=0 \\ \hline \hline \multicolumn{4}{|c|}{ {\th \le b}} \\
 \hline \mu B \le \b/\a & \text{Case 4 } & \text{non-unique optimum} & \la = ((1 + b) \a - (1 + \th))/\b \\
 \b/\a < \mu B < 1 & \text{Case 3b} & d = (1/\mu) \ln \left(\frac{\b}{\mu B} \right) & \la
= \frac{1+b}{\mu B} - \frac{1+\th}{\b} > 0 \\
1 \le \mu B < 1+\ln \b & \text{Case 2b1}  & d=-B+\frac{1+\ln \b}{\mu} & \la =
(1+b)-\frac{1+\th}{\b} \, e^{\mu
B-1} \\
\mu B \ge 1+\ln \b & \text{Case 1} & d=0 & \la=0 \\ \hline
\end{array}$$
\caption{Classification of Pareto optimal allocations of example  in Section \ref{sect:buyer}.\label{table:ex1}}
\end{table}

\section{Summary and Conclusions}\label{sect:conclusion}

In this paper, we proved that (Pareto) optimal risk sharing contracts take the form of
deductible insurance in the setting of agents endowed with distortion risk measures and linear
transaction/premium costs. Such results continue to hold under third-party constraints. This
conforms to real-life insurance contracts both in a two-agent case (for example, casualty
reinsurance) and in a  multi-agent setting (credit derivatives based on tranches).

It would be interesting to extend our results to more general setting, in particular indifference measures based on Rank Dependent Expected Utility (RDEU, also known as Maximin Expected Utility and Savage preferences). A tractable example is the exponential-distortion risk measure, see \cite{TsanakasDesli03}:
\begin{align}\label{eq:tsanakas}
H(X) = \frac{1}{\gamma}\ln \left\{\int_{-\infty}^0 \left( g[S_{\e^{\gamma Y}}(t)] - 1 \right)
\, dt + \int_0^\infty g[S_{\e^{\gamma Y}}(t)] \, dt \right\}.
\end{align}
Note that $H$ is similar to \eqref{eq:2.1} but also features the exponential utility $u(x) =
-\e^{-\gamma x}$. The preferences induced by $H$ can be seen in the context of robust utility,
where the parameter $\gamma$ is interpreted as the risk aversion coefficient, while the
distortion function $g$ corresponds to ambiguity-aversion.

One can show that $H$ is a law-invariant, convex risk measure. However, compared to our model,
$H$ is no longer coherent or comonotone additive. Nevertheless, by Remark \ref{keyRemark} our
analysis up to Theorem \ref{thm:Po} still applies. However, because the non-linear
$\log$-transformation in \eqref{eq:tsanakas} is global, the structure of Theorem \ref{thm:Po}
does not hold because we can no longer perform $t$-by-$t$ optimization for the optimal risk
allocation $f$.

From a general viewpoint, our work confirms previous results of Jouini et al.~\cite{JST} (and
originally Arrow~\cite{Arrow63}) on optimality of deductible insurance. Conversely, it
contrasts with possibility of proportional risk sharing obtained in Barrieu and El
Karoui~\cite{BarrieuKaroui05} (and originally Borch~\cite{Borch62}). The key step in our method
relies on comonotonicity of Pareto optimal allocations due to the consistency of preferences
with the stochastic convex order $\le_{cx}$. Thus, we raise the conjecture that in the setting
of law-invariant convex risk measures, optimal risk sharing always leads to insurance that
incorporates a ladder of deductibles (both in unconstrained and constrained settings).

\end{document}